\newtheorem{theorem}{Theorem}[section]
\newtheorem{lemma}{Lemma}[section]
\newtheorem{remark}{Remark}[section]
\numberwithin{equation}{section}
\title{A Posteriori Error Estimation Improved by a Reconstruction Operator for the Stokes Optimal Control Problem}
\date{}
\author{Jingshi Li$^1$,  Jiachuan Zhang$^{2,*}$\\
$^1$School of Mathematics and Statistics, \\
Nanjing University of Information Science and Technology,\\
 Nanjing, Jiangsu, 210044, P. R. China \\ 
$^2$School of Physical and Mathematical Sciences, \\
Nanjing Tech University, \\
Nanjing, Jiangsu, 211816, P. R. China. \\
$^*$Corresponding author: Jiachuan Zhang (zhangjc@njtech.edu.cn)
}
\begin{document}
\maketitle
\begin{abstract}
This paper focuses on \textit{a posteriori} error estimates for a pressure-robust finite element method, which incorporates a divergence-free reconstruction operator,  within the context of  the distributed optimal control problem constrained by the Stokes equations. We develop an enhanced residual-based \textit{a posteriori} error estimator that is independent of pressure and establish its global reliability and efficiency.
The proposed \textit{a posteriori} error estimator enables the separation of velocity and pressure errors in \textit{a posteriori} error estimation, ensuring velocity-related estimates are free of pressure influence. Numerical experiments confirm our conclusions.

\noindent{\bf Keywords:}~~ pressure-robustness, reconstruction operator, \textit{a posteriori} error estimate,Stokes optimal control problem

\noindent{\bf \text{Mathematics Subject Classification} :}~~65N15, 65N30, 76D07.
\end{abstract}

\section{Introduction}

The optimal control problems of the Stokes equations play a pivotal role in fluid dynamics and engineering applications, and it has been a hot topic to explore the error estimates for this problem. While \textit{a priori} error estimates for finite element solutions to Stokes optimal control problems have been extensively studied and are well-established , e.g. \cite{gunzburger1991analysis, gunzburger1991, deckelnick2004semidiscretization, bochev2004least, rosch2006optimal}, the realm of \textit{a posteriori} error estimates is still in its developmental stages and not yet fully understood. For the earliest work on \textit{a posteriori} error estimates for the Stokes optimal control problem, one can refer to the research by Liu and Yan \cite{Liu2002}. They give an upper bound in the \textit{a posteriori} error estimates for the finite element approximation of the control variable, velocity, and its adjoint variable. 
In recent years, Allendes et al. conducted two studies on this problem: the first addresses optimal control problems for the generalized Oseen system (encompassing the Stokes equations as a specific case), while the second focuses on point-wise tracking optimal control problems of the Stokes equations. Both works provide  \textit{a posteriori} error estimates for the finite element approximation with upper and lower bounds \cite{allendesPosterioriErrorEstimation2018a,allendes2019adaptive}.
Additionally, Yan et al. developed \textit{a posteriori} error bounds for nonconforming finite element solutions of Stokes optimal control problem \cite{shen2022convergence}.

However, existing \textit{a posteriori} error estimates in Stokes optimal control problem cannot decouple the velocity and pressure errors, such as \cite{Liu2002,allendes2019adaptive,shen2022convergence}, which can lead to a series of issues \cite{olshanskii2004grad, olshanskii2009grad, linke2014role, john2017divergence}: First, the velocity error becomes dependent on the pressure multiplied by the reciprocal of viscosity parameter. In detail, when the viscosity parameter is very small or the pressure is very large, the velocity error may become excessively large, a phenomenon particularly pronounced in fluid flow simulations. Moreover, this lack of robustness can also lead to numerical solution instability, which can affect the reliability and effectiveness of the simulation results. In practical applications, this could result in inaccurate predictions of flow characteristics, such as the distribution of velocity and pressure fields. Therefore, developing pressure-robust methods for velocity error estimation is crucial to ensure numerical stability and accuracy.

Currently, there are roughly three approaches that can achieve pressure robustness. The first method is based on the classic divergence-free mixed Galerkin scheme, which, through the new concept of finite element exterior calculus, has achieved consistency and divergence-free characteristics of the discrete velocity in the $H^1$ space \cite{buffa2011isogeometric}. The second method utilizes the discontinuous Galerkin technique, which, through a mixed scheme, seeks velocities that are H(div) consistent and divergence-free, thereby ensuring a certain level of mathematical regularity \cite{cockburn2007note, wang2009robust, kanschat2014divergence}.
The third method, employs a divergence-free reconstruction operator on the test functions of the right-hand side of the equations \cite{linke2012divergence, linke2014role, linke2016robust}. This technique effectively maps discretely divergence-free functions to their exact counterparts, thereby ensuring the pressure-robustness. In this paper, we primarily adopt the third method. Compared to the first two methods, the approach based on divergence-free reconstruction operators can achieve pressure robustness at a relatively low cost.

For the  incompressible Stokes equations $-\nu\triangle \boldsymbol{y}+\nabla p = \boldsymbol{f}, \nabla \cdot \boldsymbol{y}= 0$,  Linke first establish pressure-robust \textit{a priori} error estimates based on a reconstruction operator \cite{linke2014role}. Subsequently, Lederer et al. further advanced this work by developing pressure-robust \textit{a posteriori} error estimates \cite{ledererRefinedPosterioriError2019}.  They constructed \textit{a posteriori} error estimator that are independent of pressure, which was achieved by replacing the traditional  residual-based volume contribution $\|h_T (\boldsymbol{f}-\nabla p_h +\nu div_h(\nabla \boldsymbol{y}_h))\|_{L^2(T)}$ with $\|h_T^2 curl(\boldsymbol{f}+\nu div_h(\nabla \boldsymbol{y}_h))\|_{L^2(T)}$. Very recently, Merdon et al. studied the pressure-robust discretization of the Stokes optimal control problem based on a reconstruction operator, and concluded that the \textit{a priori} errors of the velocity and its adjoint are all independent of the pressure \cite{merdon2023pressure}. However, for the pressure-robust \textit{a posteriori} error estimates of the Stokes optimal control problem, to the best of our knowledge, no relevant conclusions can be found yet. 

In this paper, we focus on the pressure-robust \textit{a posteriori} error estimation in the distributed optimal control problem of the Stokes equations. By employing a divergence-free reconstruction operator, we obtain a pressure-robust finite element discretization of the original problem. The main contribution of this work is that we construct a pressure-independent \textit{a posteriori} error estimator for control variable, velocity, and the adjoint variables. The error estimator comprises three components: the discretization of the necessary optimality conditions,  the discretized state term with pressure removed, and the discretized adjoint term with the pressure's adjoint removed. We have also demonstrated the global efficiency and reliability of the error estimator.

This paper is organized as follows. Section 2 introduces the Stokes optimal control problem that we considered and some necessary notations.
Section 3 presents the classical finite element discretization of the Stokes optimal control problem and the standard residual-based \textit{a posteriori} error estimator. The divergence-free reconstruction operator is introduced in section 4, and we prove the \textit{a posteriori} error estimate bounds for pressure-robust finite element formulation of the control problem in this section. In section 5,  three numerical examples are conducted to verify our theoretical results. The numerical results indicate that our theory is also applicable to non-homogeneous boundary conditions.

\section{ Model problem and notations}
In this section, we present the model problem and establish the relevant notation and preliminary conclusions necessary for our later analysis.

\subsection{ The optimal control problem constrained by the Stokes equations}

Let $\Omega$ be a bounded, open, polygonal domain in $\mathds{R}^d$, where $d=2$ or $3$, with a Lipschitz boundary $\partial \Omega$.
Define the Sobolev spaces $H_0^1(\Omega):=\{y\in H^1(\Omega)|tr(u)=0~ on~ \partial \Omega \}$, in which $tr$ denotes the trace operator. And we introduce the spaces $\boldsymbol{V}=[H_0^1(\Omega)]^d$, $\boldsymbol{U}=[L^2(\Omega)]^d$, and $Q=L_0^2(\Omega):=\{q\in L^2(\Omega)|\int_{\Omega}qdx=0\}$. For any subset $G\subseteq \Omega$, we adopt the notations $\|\cdot\|_{m,G}$ and $|\cdot|_{m,G}$ to represent the Sobolev space norm and seminorm, respectively, associated with $W^{m,2}(G)$, and extend these notations to vector-valued functions. Specifically, when $G$ coincides with the domain $\Omega$ or when $m=0$, we may omit $\Omega$ or $m$ from the subscript of the norms for simplicity.

Given a desired state $\boldsymbol{y_d}\in \boldsymbol{V}$, we formulate the optimal control problem constrained by Stokes equations as follows: Find the state $\boldsymbol{y}\in \boldsymbol{V}$, the pressure $p\in Q$, and the control $\boldsymbol{u}\in \boldsymbol{U}$ that minimize the cost functional
\begin{align}
\label{Eq: cost function}
J(\boldsymbol{y},\boldsymbol{u}):=\frac{1}{2}\|\boldsymbol{y}-\boldsymbol{y_d}\|^2+\frac{\alpha}{2}\|\boldsymbol{u}\|^2
\end{align}
subject to the incompressible Stokes equations
\begin{align}
\label{Eq: Stokes}
\left\{\begin{aligned}
-\nu\triangle \boldsymbol{y}+\nabla p &= \boldsymbol{u}, ~&in ~~ \Omega,\\
\nabla \cdot \boldsymbol{y} &= 0, ~&in ~~ \Omega,\\
\boldsymbol{y}&=\boldsymbol{0}, ~ &on ~~\partial \Omega.
\end{aligned}
\right.
\end{align}
Here, $\nu>0$ represents the viscosity coefficient, $\boldsymbol{y_d}$ is the desired state vector, 
$\alpha>0$ is a regularization parameter.

To apply the finite element method, we recast the control problem given by equations (\ref{Eq: cost function})-(\ref{Eq: Stokes}) 
in the following weak formula: Find $(\boldsymbol{y}, p,\boldsymbol{u}) \in \boldsymbol{V}\times Q\times \boldsymbol{U}$ satisfying
\begin{align}\label{Eq: cost function weak}
\min J(\boldsymbol{y},\boldsymbol{u})
\end{align}
and the following variational equations
\begin{align}\label{Eq: Stokes weak}
\left\{\begin{aligned}
a(\boldsymbol{y},\boldsymbol{v})-b(\boldsymbol{v},p) &= (\boldsymbol{u},\boldsymbol{v}), ~&\forall \boldsymbol{v}\in \boldsymbol{V},\\
b(\boldsymbol{y},\phi) &= 0, ~&\forall  \phi \in Q,
\end{aligned}
\right.
\end{align}
where the bilinear forms are defined as
\begin{align*}
a(\boldsymbol{\varphi},\boldsymbol{\psi})&:=\int_\Omega \nu\nabla \boldsymbol{\varphi} \cdot \nabla \boldsymbol{\psi}, \quad\forall ~ (\boldsymbol{\varphi}, \boldsymbol{\psi})\in \boldsymbol{V}\times\boldsymbol{V},\\
 b(\boldsymbol{\varphi},\psi)&:=\int_\Omega  \psi \nabla \cdot \boldsymbol{\varphi}, \quad\forall ~  (\boldsymbol{\varphi}, \psi)\in \boldsymbol{V}\times Q,\\
 (\boldsymbol{\varphi},\boldsymbol{\psi})&:=\int_\Omega \boldsymbol{\varphi} \cdot \boldsymbol{\psi}, \quad\forall ~  (\boldsymbol{\varphi}, \boldsymbol{\psi})\in \boldsymbol{U}\times\boldsymbol{U}.
\end{align*}

For the state equation, we have the following \textit{a priori} estimate of the velocity and pressure.
\begin{lemma}(see Theorem~5.4 in \cite{girault2012finite})\label{Lem: stability estimate}
If $\textit{\textbf{f}}\in \boldsymbol{U}$, and $(\textbf{w},\rho)$ is the solution of the following equation
\begin{align*}
a(\textbf{w},\boldsymbol{v})\pm b(\boldsymbol{v},\rho)=(
\textit{\textbf{f}},\boldsymbol{v}),~~~~\forall~ \boldsymbol{v}\in \boldsymbol{V},\\
b(\textbf{w},\phi)=0,~~~~\forall~ \phi\in Q.
\end{align*}
Then, it has
\begin{align*}
\|\textbf{w}\|_1+\|\rho\|_0\leq C\|\textit{\textbf{f}}\|_0,
\end{align*}
where $C$ is independent of $\textbf{w}, \rho, \textit{\textbf{f}}$.
\end{lemma}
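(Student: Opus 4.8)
The plan is to estimate the velocity $\mathbf{w}$ and the pressure $\rho$ separately, exploiting the coercivity of $a$ on the divergence-constrained subspace together with the inf-sup (LBB) stability of the pair $(\boldsymbol{V},Q)$. Since the sign in $\pm b(\boldsymbol{v},\rho)$ enters the argument only through quantities that are immediately bounded in absolute value, both the $+$ and $-$ cases can be handled simultaneously.

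First I would bound the velocity by an energy argument. Testing the first equation with $\boldsymbol{v}=\mathbf{w}\in\boldsymbol{V}$ and using the second equation with $\phi=\rho\in Q$ to annihilate the coupling term $b(\mathbf{w},\rho)=0$, one obtains $a(\mathbf{w},\mathbf{w})=(\mathbf{f},\mathbf{w})$, that is $\nu|\mathbf{w}|_1^2=(\mathbf{f},\mathbf{w})$. Applying the Cauchy--Schwarz inequality together with the Poincar\'e--Friedrichs inequality $\|\mathbf{w}\|_0\leq C_P|\mathbf{w}|_1$ valid on $\boldsymbol{V}=[H_0^1(\Omega)]^d$ gives $\nu|\mathbf{w}|_1^2\leq\|\mathbf{f}\|_0\|\mathbf{w}\|_0\leq C_P\|\mathbf{f}\|_0|\mathbf{w}|_1$, whence $|\mathbf{w}|_1\leq (C_P/\nu)\|\mathbf{f}\|_0$. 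Combining with Poincar\'e once more yields $\|\mathbf{w}\|_1\leq C\|\mathbf{f}\|_0$.

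Next I would bound the pressure using the inf-sup condition for the continuous Stokes pair: there exists $\beta>0$ such that $\sup_{\boldsymbol{v}\in\boldsymbol{V}} b(\boldsymbol{v},\rho)/\|\boldsymbol{v}\|_1\geq\beta\|\rho\|_0$ for all $\rho\in Q$. Rearranging the first equation gives $\pm b(\boldsymbol{v},\rho)=(\mathbf{f},\boldsymbol{v})-a(\mathbf{w},\boldsymbol{v})$, whose right-hand side is bounded in modulus by $(\|\mathbf{f}\|_0+\nu|\mathbf{w}|_1)\|\boldsymbol{v}\|_1$ via Cauchy--Schwarz and the continuity of $a$. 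Dividing by $\|\boldsymbol{v}\|_1$, taking the supremum over $\boldsymbol{v}\in\boldsymbol{V}$, and inserting the velocity bound from the previous step, I obtain $\beta\|\rho\|_0\leq\|\mathbf{f}\|_0+\nu|\mathbf{w}|_1\leq(1+C_P)\|\mathbf{f}\|_0$, i.e. $\|\rho\|_0\leq C\|\mathbf{f}\|_0$. Adding the two estimates produces the asserted bound.

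The only genuinely nontrivial ingredient is the inf-sup condition, which encodes the surjectivity of the divergence operator from $\boldsymbol{V}$ onto $Q=L_0^2(\Omega)$; this is a standard property of the Stokes system on a bounded Lipschitz polygonal domain and is precisely the structural fact underlying the cited Theorem~5.4 of \cite{girault2012finite}, so I would invoke it rather than reprove it. The remaining steps are routine energy estimates, and all constants depend only on $\nu$ and $\Omega$ (through $C_P$ and $\beta$), not on the data $\mathbf{f}$ or the solution $(\mathbf{w},\rho)$, as required.
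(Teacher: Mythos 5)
Your proof is correct, but it follows a genuinely different route from the paper's. The paper does not reprove stability at all: in the remark immediately following the lemma, it invokes Theorem~5.4 of Girault--Raviart as a black box, which gives $\|\boldsymbol{w}\|_1+\|\rho\|_0\leq C\|\boldsymbol{f}\|_{-1}$, and then simply upgrades the dual norm to the $L^2$ norm via
\begin{align*}
\|\boldsymbol{f}\|_{-1}=\sup_{\boldsymbol{v}\in \boldsymbol{V}}\frac{(\boldsymbol{f},\boldsymbol{v})}{\|\nabla\boldsymbol{v}\|_0}\leq C_p\|\boldsymbol{f}\|_0,
\end{align*}
using only Cauchy--Schwarz and Poincar\'e --- a two-line norm-comparison reduction. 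You instead rederive the estimate from first principles: testing with $\boldsymbol{v}=\boldsymbol{w}$ and $\phi=\rho$ annihilates the coupling term and yields the velocity bound $|\boldsymbol{w}|_1\leq (C_P/\nu)\|\boldsymbol{f}\|_0$ by coercivity, and the continuous inf-sup condition for the pair $(\boldsymbol{V},Q)$ then recovers the pressure; your handling of the $\pm$ sign is also correct, since $b(\boldsymbol{w},\rho)$ vanishes identically in the energy step and $b(\boldsymbol{v},\rho)$ is only estimated in modulus afterwards. Both arguments are sound, and they trade off differently: your version is self-contained modulo the inf-sup condition (which, as you rightly note, is the structural fact underlying the cited theorem anyway) and has the merit of making the $\nu$-dependence of the constants explicit, whereas the paper's reduction is shorter and is the natural move when the dual-norm stability estimate is already available in the literature. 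Since the lemma only asserts that $C$ is independent of $\boldsymbol{w}$, $\rho$, and $\boldsymbol{f}$, the $\nu$-dependence appearing in your constants is consistent with the statement and is not a defect.
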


\begin{remark}
The original property of Theorem~5.4 in \cite{girault2012finite} is:
\begin{align*}
\|\textbf{w}\|_1+\|\rho\|_0\leq C\|\textit{\textbf{f}}\|_{-1}.
\end{align*}
From Cauchy-Schwarz inequality and Poincar\'e inequality with constant $C_p$, we have
\begin{align*}
\|\boldsymbol{f}\|_{-1}=\sup_{\boldsymbol{v}\in \boldsymbol{V}}\frac{(\boldsymbol{f},\boldsymbol{v})}{\|\nabla\boldsymbol{v}\|_0}\leq \sup_{\boldsymbol{v}\in \boldsymbol{V}}\frac{\|\boldsymbol{f}\|_0\|\boldsymbol{v}\|_0}{\|\nabla\boldsymbol{v}\|_0}
\leq \sup_{\boldsymbol{v}\in \boldsymbol{V}}\frac{C_p\|\boldsymbol{f}\|_0\|\nabla\boldsymbol{v}\|_0}{\|\nabla\boldsymbol{v}\|_0}
=C_p\|\boldsymbol{f}\|_0,
\end{align*}
which implies the conclusion in Lemma~\ref{Lem: stability estimate}.
\end{remark}

Incorporating the adjoint velocity $\boldsymbol{z}\in \boldsymbol{V}$ and the adjoint pressure $r\in Q$, we obtain the dual form of the state equation (\ref{Eq: Stokes weak}): 
\begin{align*}
\left\{\begin{aligned}
a(\boldsymbol{v},\boldsymbol{z})+b(\boldsymbol{v},r) &= (\boldsymbol{y}-\boldsymbol{y_d},\boldsymbol{v}), ~\forall~ \boldsymbol{v}\in \boldsymbol{V},\\
b(\boldsymbol{z},\phi) &= 0, ~\forall~  \phi \in Q.
\end{aligned}
\right.
\end{align*}

We now present the existence and uniqueness of the optimal solutions through the following lemma, which is fundamental to our analysis. 
\begin{lemma}(see \cite{Edmunds1972})\label{Thm: opt sys}
The optimal control problem (\ref{Eq: cost function weak})-(\ref{Eq: Stokes weak}) has a unique optimal solution $(\bar{\boldsymbol{y}},\bar{p},\bar{\boldsymbol{u}})\in \boldsymbol{V}\times Q \times \boldsymbol{U}$, which together with the optimal adjoint variables $(\bar{\boldsymbol{z}},\bar{r})\in \boldsymbol{V}\times Q$ satisfies the following optimality system
\begin{align}\label{Eq: opt sys}
\begin{aligned}
a(\bar{\boldsymbol{y}},\boldsymbol{v})-b(\boldsymbol{v},\bar{p})&= (\bar{\boldsymbol{u}},\boldsymbol{v}), &~\forall~ \boldsymbol{v}\in \boldsymbol{V},\\
b(\bar{\boldsymbol{y}},\phi)&= 0, &~\forall  \phi \in Q,\\
a(\boldsymbol{w},\bar{\boldsymbol{z}})+b(\boldsymbol{w},\bar{r})&= (\bar{\boldsymbol{y}}-\boldsymbol{y_d},\boldsymbol{w}), &~\forall~ \boldsymbol{w}\in \boldsymbol{V},\\
b(\bar{\boldsymbol{z}},\psi)&= 0, &~\forall~  \psi \in Q,\\
(\bar{\boldsymbol{z}}+\alpha
\bar{\boldsymbol{u}},\boldsymbol{\mu})&= 0,&~ \forall~
\boldsymbol{\mu}\in \boldsymbol{U}.
\end{aligned}
\end{align}
The final equation of the system is called the first-order necessary condition for optimality, which can be directly written as
\begin{align}\label{Eq: 1 order necessary optimal condition}
 \bar{\boldsymbol{u}}= -\frac{1}{\alpha}\bar{\boldsymbol{z}}.
\end{align}
\end{lemma}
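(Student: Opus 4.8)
The plan is to remove the partial differential equation constraint by introducing a control-to-state solution operator, reduce the problem to an unconstrained convex minimization over the control space, and then recover the optimality system by differentiating the reduced cost functional.

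First I would show that for every fixed control $\boldsymbol{u}\in\boldsymbol{U}$ the state system (\ref{Eq: Stokes weak}) admits a unique solution $(\boldsymbol{y},p)\in\boldsymbol{V}\times Q$. This is exactly the setting of Lemma~\ref{Lem: stability estimate} with $\boldsymbol{f}=\boldsymbol{u}$ and the $-$ sign, so existence, uniqueness, and the bound $\|\boldsymbol{y}\|_1+\|p\|_0\le C\|\boldsymbol{u}\|_0$ follow at once. Consequently the map $S:\boldsymbol{U}\to\boldsymbol{V}$, $\boldsymbol{u}\mapsto\boldsymbol{y}$, is a bounded linear operator, and I can define the reduced cost functional $\hat J(\boldsymbol{u}):=J(S\boldsymbol{u},\boldsymbol{u})=\tfrac12\|S\boldsymbol{u}-\boldsymbol{y_d}\|^2+\tfrac{\alpha}{2}\|\boldsymbol{u}\|^2$.

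Next I would establish existence and uniqueness of a minimizer of $\hat J$ over the Hilbert space $\boldsymbol{U}$. Since $S$ is linear and bounded, $\hat J$ is continuous and convex; the quadratic regularization term $\tfrac{\alpha}{2}\|\boldsymbol{u}\|^2$ with $\alpha>0$ makes $\hat J$ strictly convex and coercive. The direct method of the calculus of variations then applies: a minimizing sequence is bounded, hence admits a weakly convergent subsequence, and weak lower semicontinuity of the convex continuous functional $\hat J$ yields a minimizer $\bar{\boldsymbol{u}}$, while strict convexity gives uniqueness. Setting $\bar{\boldsymbol{y}}=S\bar{\boldsymbol{u}}$ and letting $\bar p$ be the associated pressure (again unique by Lemma~\ref{Lem: stability estimate}) produces the unique optimal triple $(\bar{\boldsymbol{y}},\bar p,\bar{\boldsymbol{u}})$.

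Finally I would derive the first-order condition. Because $\hat J$ is convex and Fréchet differentiable, $\bar{\boldsymbol{u}}$ is optimal if and only if $\hat J'(\bar{\boldsymbol{u}})=0$, that is, $(S^\ast(S\bar{\boldsymbol{u}}-\boldsymbol{y_d})+\alpha\bar{\boldsymbol{u}},\boldsymbol{\mu})=0$ for all $\boldsymbol{\mu}\in\boldsymbol{U}$. The crux is to identify $S^\ast(\bar{\boldsymbol{y}}-\boldsymbol{y_d})$ with the adjoint velocity $\bar{\boldsymbol{z}}$: introducing $(\bar{\boldsymbol{z}},\bar r)\in\boldsymbol{V}\times Q$ as the well-posed solution (by Lemma~\ref{Lem: stability estimate} with the $+$ sign) of the dual Stokes system with data $\bar{\boldsymbol{y}}-\boldsymbol{y_d}$, I would test the state equation with $\bar{\boldsymbol{z}}$ and the adjoint equation with $\bar{\boldsymbol{y}}$, then use the symmetry of $a(\cdot,\cdot)$ together with $b(\bar{\boldsymbol{y}},\bar r)=b(\bar{\boldsymbol{z}},\bar p)=0$ to obtain $(S^\ast(\bar{\boldsymbol{y}}-\boldsymbol{y_d}),\boldsymbol{\mu})=(\bar{\boldsymbol{z}},\boldsymbol{\mu})$. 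This collapses the optimality condition to $(\bar{\boldsymbol{z}}+\alpha\bar{\boldsymbol{u}},\boldsymbol{\mu})=0$ for all $\boldsymbol{\mu}$, which is precisely (\ref{Eq: 1 order necessary optimal condition}). The main obstacle is exactly this adjoint bookkeeping: correctly pairing the primal and dual bilinear forms with their opposite signs on the $b$ term and verifying that the divergence constraints drop out, so that $S^\ast$ is faithfully represented by the adjoint Stokes problem.
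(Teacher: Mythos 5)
The paper offers no proof of this lemma at all: it is stated with a citation to \cite{Edmunds1972} (Lions' classical theory), and your reduced-functional argument --- control-to-state operator $S$ via Lemma~\ref{Lem: stability estimate}, strict convexity and coercivity of $\hat J$ from $\alpha>0$, the direct method, and identification of $S^\ast(\bar{\boldsymbol{y}}-\boldsymbol{y_d})$ with $\bar{\boldsymbol{z}}$ through the adjoint Stokes system --- is precisely the standard proof found in that literature, so you have correctly supplied what the paper delegates to a reference. One bookkeeping correction in the final step: to show $(S^\ast(\bar{\boldsymbol{y}}-\boldsymbol{y_d}),\boldsymbol{\mu})=(\bar{\boldsymbol{z}},\boldsymbol{\mu})$ for \emph{every} direction $\boldsymbol{\mu}\in\boldsymbol{U}$, you must test the adjoint equation with $S\boldsymbol{\mu}$ (the state generated by $\boldsymbol{\mu}$) and the state equation for $S\boldsymbol{\mu}$ with $\bar{\boldsymbol{z}}$; testing the adjoint equation with $\bar{\boldsymbol{y}}$, as you wrote, only produces the pairing in the single direction $\boldsymbol{\mu}=\bar{\boldsymbol{u}}$. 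Since $S$ is linear this is a one-line fix --- the divergence terms $b(S\boldsymbol{\mu},\bar r)$ and $b(\bar{\boldsymbol{z}},p_{\boldsymbol{\mu}})$ vanish exactly as you describe --- and the rest of your argument, including uniqueness of $\bar p$ and the reduction of $\hat J'(\bar{\boldsymbol{u}})=0$ to (\ref{Eq: 1 order necessary optimal condition}), stands as written.
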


\subsection{Notation}

Denote $\mathcal{T}_h=\bigcup_{i=1}^N T_i$ as a shape regular (conforming) partition of $\bar{\Omega}$. 
Assume $\mathcal{E}_h$ to be the set of edges within this partition. And the set of all the internal edges is denoted by $\mathcal{E}_h\setminus \partial \Omega$.
For each element $T\in \mathcal{T}_h$ and edge $E\in \mathcal{E}_h$, let $h_T$ and $h_E$ represent their respective diameters.
Define $\mathcal{T}_E$ as the collection of all triangular elements containing the edge $E$. 
The global mesh size $h$ is given by  $h:=max\{h_T:T\in \mathcal{T}_h\}$. 
Let $\boldsymbol{n}_E$ and $ \boldsymbol{\tau}_E$ 
denote the unit normal vector and the unit tangential vector of $E\in \mathcal{E}_h$, respectively. For internal edges $E\in \mathcal{E}_h\setminus \partial \Omega$, the direction of $\boldsymbol{n}_E$ is arbitrary but fixed, whereas for boundary edges $E\in \partial \Omega$,
$\boldsymbol{n}_E$ is oriented outward. 

We define the jump of a piecewise continuous function $v_h$ across an internal edge $E\in \mathcal{E}_h\setminus \partial \Omega$ as
\begin{align*}
  [v_h]:=v_h|_{T_1}-v_h|_{T_2},
\end{align*}
where $T_1$ and $T_2$ are the elements sharing the edge $E$, with $T_1\cup T_2=\mathcal{T}_E$.

Furthermore, $\mathcal{P}_k(T)$ represents the space of polynomials of degree at most $k$ on $T\in \mathcal{T}_h$. 
And the space $\mathcal{P}_k^+(T) $ is an extension of $\mathcal{P}_k(T)$, 
enriched with  standard cell bubble functions (for definition, see reference \cite{crouzeix1973conforming}) for $k\geq2$.

Suppose  ${\boldsymbol I}_k$ is the local interpolation 
operator that interpolates into the space $[\mathcal{P}_k(T)]^d$.
For $k\geq 0$, it has the following local interpolation estimate for ${\boldsymbol I}_k$ (cf. \cite{ern2004theory})
\begin{align}\label{Eq: interpolation estimate}
  \|\boldsymbol{v}-{\boldsymbol I}_k\boldsymbol{v}\|_{0,T}\lesssim h_T\|\boldsymbol{v}\|_{1,T},\quad \forall T\in \mathcal{T}_h, \forall \boldsymbol{v}\in \boldsymbol{V}.
\end{align}

\begin{remark}
In this work we use $\mathbbm{a}\lesssim \mathbbm{b}$ when there exists a constant $c$ independent of $\mathbbm{a},\mathbbm{b}, h$ such that $\mathbbm{a}\leq c\mathbbm{b}$.
\end{remark}

Let $M$ represent either a triangular element $T$ or an edge $E$, with $h_M$ being the diameter of $M$. And $\mathcal{M}$ is the collection of all such elements or edges.
The data oscillation is then defined as
\begin{equation}\label{Def: osc}
  osc(\cdot,\mathcal{M},k)^2:=\sum_{M\in \mathcal{M}} h_M^2\|(1-\Pi_k)\cdot\|_M^2,
\end{equation}
where $\Pi_k$ is the locally $L^2$-projection operator into $\mathcal{P}_k(T)$ or $[\mathcal{P}_k(T)]^d$ for $T\in  \mathcal{T}_h$.
For the specific case when $k=2$, we denote the oscillation simply as $osc(\cdot,\mathcal{M})$.

\section{Classical finite element discretization and standard \textit{a posteriori} error estimates}
When employing classical finite element methods to discretize the Stokes equations, the standard residual-based \textit{a posteriori} error estimator for velocity typically incorporates a volume contribution that is associated with the pressure term. Under conditions of low viscosity, this can lead to the locking phenomenon. Similar difficulties are also encountered when considering Stokes optimal control problems. This section displays the classical finite element formulation for solving the optimal control problem of Stokes equations and provide the standard \textit{a posteriori} error estimation based on residuals.

Define the conforming  finite element spaces as 
\begin{align*}
  \boldsymbol{V}_h:=\{\boldsymbol{v}_h\in \boldsymbol{V}:\boldsymbol{v}_h|_T\in [\mathcal{P}_2^+(T)]^d , ~\forall T\in \mathcal{T}_h  \},
\end{align*}
\begin{align*}
Q_h:=\{q_h\in Q:q_h|_T\in \mathcal{P}_1(T), ~\forall T\in \mathcal{T}_h  \}.
\end{align*}
The above finite element spaces for velocity and pressure satisfy the inf-sup condition (see \cite{linke2016robust}): there exists a constant $\gamma$ such that 
\begin{align*}
  \inf_{q_h\in Q_h\setminus \{0\}} \sup_{\boldsymbol{v}_h\in \boldsymbol{V}_h\setminus \{0\}} \frac{\int_{\Omega}q_h \nabla\cdot \boldsymbol{v}_h dx}{\|q_h\|\|\nabla \boldsymbol{v}_h\|}=\gamma>0.
\end{align*}

Define
\begin{align*}
\boldsymbol{U}_h:=\{\boldsymbol{u}_h\in \boldsymbol{U}:\boldsymbol{u}_h|_T\in [\mathcal{P}_2(T)]^d, ~\forall T\in \mathcal{T}_h  \}.
\end{align*}

With these spaces, we give the finite element approximation for the optimal control problem (\ref{Eq: cost function weak})-(\ref{Eq: Stokes weak}) as: Find the optimal  $(\boldsymbol{y}_h,p_h,\boldsymbol{u}_h)\in \boldsymbol{V}_h\times Q_h\times \boldsymbol{U}_h$ such that
\begin{align} 
&\min J_h(\boldsymbol{y}_h,\boldsymbol{u}_h):=\frac{1}{2}\|\boldsymbol{y}_h-\boldsymbol{y_d}\|^2+\frac{\alpha}{2}\|\boldsymbol{u}_h\|^2
 \label{Eq: cost function classical finite}\\
s.t.&\left\{\begin{aligned}\label{Eq: Stokes classical finite}
a(\boldsymbol{y}_h,\boldsymbol{v}_h)-b(\boldsymbol{v}_h,p_h) = (\boldsymbol{u}_h,\boldsymbol{v}_h), ~\forall \boldsymbol{v}_h\in \boldsymbol{V}_h,\\
b(\boldsymbol{y}_h,\phi_h) = 0, ~\forall  \phi_h \in Q_h.
\end{aligned}
\right.
\end{align}

It is well known that the discrete optimal control problem (\ref{Eq: cost function classical finite})-(\ref{Eq: Stokes classical finite}) has a unique solution $(\bar{\boldsymbol{y}}_h^c,\bar{p}_h^c,\bar{\boldsymbol{u}}_h^c)\in \boldsymbol{V}_h\times Q_h\times \boldsymbol{U}_h$. And the optimal solution together with the optimal adjoint variables
$(\bar{\boldsymbol{z}}_h^c,\bar{r}_h^c)\in \boldsymbol{V}_h\times Q_h$ satisfy  the following discrete optimality system
\begin{align}\label{Eq: opt sys classical finite}
\begin{aligned}
a(\bar{\boldsymbol{y}}_h^c,\boldsymbol{v}_h)-b(\boldsymbol{v}_h,\bar{p}_h^c)&= (\bar{\boldsymbol{u}}_h^c,\boldsymbol{v}_h), &~\forall \boldsymbol{v}_h\in \boldsymbol{V}_h,\\
b(\bar{\boldsymbol{y}}_h^c,\phi_h)&= 0, &~\forall  \phi_h \in Q_h,\\
a(\boldsymbol{w}_h,\bar{\boldsymbol{z}}_h^c)+b(\boldsymbol{w}_h,\bar{r}_h^c)&= (\bar{\boldsymbol{y}}_h^c-\boldsymbol{y_d},\boldsymbol{w}_h), &~\forall \boldsymbol{w}_h\in \boldsymbol{V}_h,\\
b(\bar{\boldsymbol{z}}_h^c,\psi_h)&= 0, &~\forall  \psi_h \in Q_h,\\
(\bar{\boldsymbol{z}}_h^c+\alpha
\bar{\boldsymbol{u}}_h^c,\boldsymbol{\mu}_h)&= 0,&~ \forall
\boldsymbol{\mu}_h\in \boldsymbol{U}_h.
\end{aligned}
\end{align}

To provide a comparative perspective with the pressure-robust \textit{a posteriori} error estimation that we will introduce later, the following theorem presents the classical residual-based \textit{a posteriori} error estimation for the control, velocity, and its corresponding adjoint variable.

\begin{theorem}\label{thm3}
Suppose $(\bar{\boldsymbol{y}}, \bar{p},\bar{\boldsymbol{z}},\bar{r},\bar{\boldsymbol{u}})$ is  the solution of the optimality system (\ref{Eq: opt sys}).
Then the following estimates hold

\begin{equation}\label{Eq: Stokes reliability classical}
\|\bar{\boldsymbol{u}}-\bar{\boldsymbol{u}}_h^c\|+\|\nabla(\bar{\boldsymbol{y}}-\bar{\boldsymbol{y}}_h^c)\|+\|\nabla(\bar{\boldsymbol{z}}-\bar{\boldsymbol{z}}_h^c)\| \lesssim \hat{\varepsilon},
\end{equation}
and
\begin{equation}\label{Eq: Stokes efficiency classical}
\hat{\varepsilon}\lesssim\|\bar{\boldsymbol{u}}-\bar{\boldsymbol{u}}_h^c\|+\|\nabla(\bar{\boldsymbol{y}}-\bar{\boldsymbol{y}}_h^c)\|+\|\nabla(\bar{\boldsymbol{z}}-\bar{\boldsymbol{z}}_h^c)\|+\|\bar{p}-\bar{p}_h^c\|+\|\bar{r}-\bar{r}_h^c\|+OSC_{1},
\end{equation}
with the definition of  $\hat{\varepsilon}$ given by
 \begin{align}
 \begin{aligned}\label{Def: error estimator classical}
   \hat{\varepsilon}^2 :=& \frac{1}{\nu^2}\sum_{T\in \mathcal{T}_h}\| h_T (\bar{\boldsymbol{u}}_h^c-\nabla \bar{p}_h^c+\nu \Delta\bar{\boldsymbol{y}}_h^c)\|_{T}^2+\sum_{E\in \mathcal{E}_h\setminus \partial \Omega}\|h_E^{1/2}[ \nabla \bar{\boldsymbol{y}}_h^c\cdot
  \boldsymbol{n}_E]\|_{E}^2\\
  & +\frac{1}{\nu^2}\sum_{T\in \mathcal{T}_h}\| h_T (\bar{\boldsymbol{y}}_h^c-\boldsymbol{y}_d+\nabla \bar{r}_h^c+\nu \Delta\bar{\boldsymbol{z}}_h^c)\|_{T}^2+\sum_{E\in \mathcal{E}_h\setminus \partial \Omega}\|h_E^{1/2}[ \nabla \bar{\boldsymbol{z}}_h^c\cdot
  \boldsymbol{n}_E]\|_{E}^2\\
  & +\|\nabla\cdot\bar{\boldsymbol{y}}_h^c\|^2+\|\nabla\cdot\bar{\boldsymbol{z}}_h^c\|^2+\|\frac{\bar{\boldsymbol{z}}_h^c}{\alpha}+\bar{\boldsymbol{u}}_h^c\|^2.
    \end{aligned}
 \end{align}
 And
 \begin{align*}
   OSC_{1}=1/\nu (osc(\bar{\boldsymbol{u}}_h^c-\nabla \bar{p}_h^c,\mathcal{T}_h)+osc(\bar{\boldsymbol{y}}_h^c-\boldsymbol{y}_d-\nabla \bar{r}_h^c,\mathcal{T}_h)),
 \end{align*}
 where the definition of the data oscilation  $osc(\cdot,\mathcal{T}_h)$  can be found in the special case of  (\ref{Def: osc}).

\end{theorem}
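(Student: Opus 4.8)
The plan is to establish the two inequalities separately: the reliability bound (\ref{Eq: Stokes reliability classical}) by introducing auxiliary continuous problems and combining the stability estimate of Lemma~\ref{Lem: stability estimate} with the residual-based bounds for a single Stokes system, and the efficiency bound (\ref{Eq: Stokes efficiency classical}) by the localized bubble-function technique of Verf\"urth. The key structural fact to exploit is that the optimality system (\ref{Eq: opt sys classical finite}) lets each discrete quantity be viewed as the discretization of an ordinary Stokes (or dual Stokes) problem with a computable right-hand side.

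For reliability I would first decouple the three errors. Since the first-order condition (\ref{Eq: 1 order necessary optimal condition}) gives $\bar{\boldsymbol{u}}=-\tfrac1\alpha\bar{\boldsymbol{z}}$, while the last line of (\ref{Eq: opt sys classical finite}) forces $\bar{\boldsymbol{u}}_h^c=-\tfrac1\alpha\Pi\bar{\boldsymbol{z}}_h^c$ with $\Pi$ the $L^2$-projection onto $\boldsymbol{U}_h$, the control error splits as $\bar{\boldsymbol{u}}-\bar{\boldsymbol{u}}_h^c=-\tfrac1\alpha(\bar{\boldsymbol{z}}-\bar{\boldsymbol{z}}_h^c)-(\tfrac{\bar{\boldsymbol{z}}_h^c}{\alpha}+\bar{\boldsymbol{u}}_h^c)$, which already exposes the last term of $\hat\varepsilon^2$. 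I would then introduce the intermediate pair $(\boldsymbol{y}(\bar{\boldsymbol{u}}_h^c),p(\bar{\boldsymbol{u}}_h^c))$ solving the continuous state equation driven by $\bar{\boldsymbol{u}}_h^c$, and the intermediate adjoint $(\boldsymbol{z}(\bar{\boldsymbol{y}}_h^c),r(\bar{\boldsymbol{y}}_h^c))$ driven by $\bar{\boldsymbol{y}}_h^c-\boldsymbol{y}_d$. By the triangle inequality the continuous-to-continuous differences $\bar{\boldsymbol{y}}-\boldsymbol{y}(\bar{\boldsymbol{u}}_h^c)$ and $\bar{\boldsymbol{z}}-\boldsymbol{z}(\bar{\boldsymbol{y}}_h^c)$ are controlled, through Lemma~\ref{Lem: stability estimate} and the Poincar\'e inequality, by $\|\bar{\boldsymbol{u}}-\bar{\boldsymbol{u}}_h^c\|$ and $\|\nabla(\bar{\boldsymbol{y}}-\bar{\boldsymbol{y}}_h^c)\|$ respectively, while the remaining continuous-to-discrete differences $\boldsymbol{y}(\bar{\boldsymbol{u}}_h^c)-\bar{\boldsymbol{y}}_h^c$ and $\boldsymbol{z}(\bar{\boldsymbol{y}}_h^c)-\bar{\boldsymbol{z}}_h^c$ are precisely the discretization errors of two ordinary Stokes problems, bounded by the volume and edge residual terms of $\hat\varepsilon$ via the classical residual analysis for the $[\mathcal{P}_2^+]^d\times\mathcal{P}_1$ pair.

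The main obstacle is that these three bounds form a circular chain — state error controls adjoint error controls control error controls state error — so they cannot be naively chained. I expect to break the cycle using the strong convexity of the reduced cost $j(q):=J(\boldsymbol{y}(q),q)$, whose gradient $j'(q)=\alpha q+\boldsymbol{z}(q)$ satisfies $(j'(q_1)-j'(q_2),q_1-q_2)\ge\alpha\|q_1-q_2\|^2$. Because $j'(\bar{\boldsymbol{u}})=0$, testing with $\bar{\boldsymbol{u}}_h^c-\bar{\boldsymbol{u}}$ yields $\alpha\|\bar{\boldsymbol{u}}-\bar{\boldsymbol{u}}_h^c\|\le\|j'(\bar{\boldsymbol{u}}_h^c)\|\le\alpha\|\tfrac{\bar{\boldsymbol{z}}_h^c}{\alpha}+\bar{\boldsymbol{u}}_h^c\|+\|\boldsymbol{z}(\bar{\boldsymbol{u}}_h^c)-\bar{\boldsymbol{z}}_h^c\|$, where the last norm is again a combination of state and adjoint residuals treated as above. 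This controls the control error purely by estimator quantities, with no residual circularity, after which the state and adjoint errors follow immediately from the splittings of the previous paragraph.

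For efficiency I would argue locally, term by term. Testing the volume residual $\bar{\boldsymbol{u}}_h^c-\nabla\bar{p}_h^c+\nu\Delta\bar{\boldsymbol{y}}_h^c$ against an interior cell bubble and the edge jump $[\nabla\bar{\boldsymbol{y}}_h^c\cdot\boldsymbol{n}_E]$ against an edge bubble, then integrating by parts and invoking the continuous state equation together with inverse inequalities, bounds each state term by $\|\nabla(\bar{\boldsymbol{y}}-\bar{\boldsymbol{y}}_h^c)\|+\|\bar{p}-\bar{p}_h^c\|$ up to data oscillation; the adjoint terms are identical, giving $\|\nabla(\bar{\boldsymbol{z}}-\bar{\boldsymbol{z}}_h^c)\|+\|\bar{r}-\bar{r}_h^c\|$ and the second oscillation. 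The divergence terms use $\nabla\cdot\bar{\boldsymbol{y}}=\nabla\cdot\bar{\boldsymbol{z}}=0$, so $\|\nabla\cdot\bar{\boldsymbol{y}}_h^c\|=\|\nabla\cdot(\bar{\boldsymbol{y}}_h^c-\bar{\boldsymbol{y}})\|\lesssim\|\nabla(\bar{\boldsymbol{y}}-\bar{\boldsymbol{y}}_h^c)\|$, and the last term is bounded by the control and adjoint errors through the same splitting used in reliability. The appearance of the pressure errors $\|\bar{p}-\bar{p}_h^c\|$ and $\|\bar{r}-\bar{r}_h^c\|$ in (\ref{Eq: Stokes efficiency classical}) is exactly the pressure dependence entering through the $\nabla\bar{p}_h^c$, $\nabla\bar{r}_h^c$ contributions to the volume residuals and the accompanying $1/\nu$ weighting, which the divergence-free reconstruction operator of the later sections is designed to remove.
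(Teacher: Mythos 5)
Your proposal is correct and takes essentially the same route as the paper: the paper proves this theorem by the very chain you describe --- auxiliary continuous Stokes/adjoint problems driven by the discrete data, the stability estimate of Lemma~\ref{Lem: stability estimate}, the single-Stokes residual reliability and efficiency bounds (imported from Theorems~3 and~4 of \cite{ledererRefinedPosterioriError2019}, whose proof is exactly your bubble-function argument), and a convexity step to break the state--adjoint--control cycle, mirroring its written-out proofs of Theorems~\ref{thm2} and~\ref{thm4}. Your monotonicity bound $(j'(q_1)-j'(q_2),q_1-q_2)\ge\alpha\|q_1-q_2\|^2$ applied at $\bar{\boldsymbol{u}}_h^c$ is the paper's identity $(\tilde{\boldsymbol{u}}-\bar{\boldsymbol{u}},\bar{\boldsymbol{z}}-\tilde{\boldsymbol{z}})=-\|\bar{\boldsymbol{y}}-\tilde{\boldsymbol{y}}\|^2$ (with $\tilde{\boldsymbol{u}}=-\bar{\boldsymbol{z}}_h^c/\alpha$) in reduced-operator form, so the difference is purely one of bookkeeping.
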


\begin{remark}
The proof relies on  Theorem 3 and Theorem 4 in \cite{ledererRefinedPosterioriError2019}.
And the verification of (\ref{Eq: Stokes reliability classical}) and  (\ref{Eq: Stokes efficiency classical}) draw parallels with those of Theorems \ref{thm2} and \ref{thm4}. The detailed proof procedure will be presented in the subsequent section. The conclusion indicates that within the classical framework of \textit{a posteriori} error estimation, it is not possible to isolate the velocity error from the pressure error (see \cite{hannukainen2012unified,allendes2019adaptive}).
\end{remark}

\section{Pressure-robust finite element discretization and improved \textit{a posteriori} error estimates}

Recent studies have shown that adding divergence-free reconstruction operator to the right-hand side of the Stokes equations can resolve the aforementioned issues, resulting in \textit{a priori} and \textit{a posteriori} velocity error estimates that are independent of pressure. Pressure-robust \textit{a priori} error estimates  have been obtained for the Stokes optimal control problem (see\cite{merdon2023pressure}). This prompts us to investigate the pressure-robust \textit{a posteriori} error estimates for the optimal control problem. In this section, we will develop \textit{a posteriori} error estimator independent of pressure and demonstrate global upper and lower bounds for ( \ref{Eq: cost function finite})-(\ref{Eq: Stokes finite}). Before that, we first introduce the reconstruction operator and list the conclusion of the existing pressure-robust \textit{a posteriori} error estimates for the Stokes equations.

\subsection{Reconstruction operator and existing pressure-robust results for the Stokes equations}

Focus on the following weak form of the standard Stokes equations:  For $\boldsymbol{f}\in \boldsymbol{U}$, find $(\boldsymbol{y},p)\in \boldsymbol{V}\times Q$ such that
\begin{align}\label{Eq: Stokes weak 1}
\begin{aligned}
a(\boldsymbol{y},\boldsymbol{v})-b(\boldsymbol{v},p)&= (\boldsymbol{f},\boldsymbol{v}), &~\forall \boldsymbol{v}\in \boldsymbol{V},\\
b(\boldsymbol{y},\phi)&= 0, &~\forall  \phi \in Q.
\end{aligned}
\end{align}

Denote a discrete divergence-free function space by
\begin{align}
\boldsymbol{V}_h^0:=\{ \boldsymbol{v}_h\in \boldsymbol{V_h}: argmin_{q_h\in Q_h}\|div~\boldsymbol{v_h}-q_h\|=0\}.
\end{align}

The Stokes equations are recognized for their invariance property (see \cite{linke2014role}): a modification of the right-hand side force vector $\boldsymbol{f}$ to $\boldsymbol{f}+\nabla \phi$ induces a corresponding adjustment in the solution set, shifting from $(\boldsymbol{y},p)$ to $(\boldsymbol{y},p+\phi)$. This presents the pressure-robustness. However, the classical finite element discretization for (\ref{Eq: Stokes weak 1}) does not preserve the invariance property. An alternative approach is to construct a reconstruction operator to ensure that the mapped test functions on the right-hand side of the equation are divergence-free.

Set a discrete finite element space as
\begin{align*}
  \boldsymbol{W}_h:=\{ \boldsymbol{w}_h\in \boldsymbol{U}:\boldsymbol{w}_h|_T\in \boldsymbol{RT}_{1}(T), \forall T\in \mathcal{T}_h ,( \phi, \boldsymbol{w}_h\cdot \boldsymbol{n}_E)_E=0, \forall \phi\in \mathcal{P}_{1}(E), \forall E\in \mathcal{E}_h\},
\end{align*}
where $ \boldsymbol{RT}_{1}(T)$ is the Raviart–Thomas space defined in \cite{raviart1977primal}.

Suppose there is a reconstruction operator $\mathcal{R}_h: \boldsymbol{V}_h\rightarrow \boldsymbol{W}_h$ that satisfies the following conditions simultaneously,
\begin{align}
 &(i)\quad \nabla\cdot(\mathcal{R}_h \boldsymbol{v}_h)=0 , ~\text{and} ~(\mathcal{R}_h \boldsymbol{v}_h\cdot \boldsymbol{n}_E)|_{E\in\partial \Omega}=0,  ~\forall \boldsymbol{v}_h\in \boldsymbol{V}_h^0. \label{Eq: div} \\
& (ii)\quad \text{For}~\forall T\in\mathcal{T}_h, ~\text{if} ~\boldsymbol{v}_h\in \boldsymbol{V}_h ~\text{and}~ \boldsymbol{w}_h\in [\mathcal{P}_{0}(T)]^d, ~\text{then}~ (\boldsymbol{v}_h-\mathcal{R}_h \boldsymbol{v}_h,\boldsymbol{w}_h)_T = 0.  \label{Eq: reconstruction operator def2}\\
 &(iii)\quad \|\boldsymbol{v}_h-\mathcal{R}_h\boldsymbol{v}_h\|_{T}\lesssim h_T^m|\boldsymbol{v}_h|_{m,T},  ~ \forall \boldsymbol{v}_h\in \boldsymbol{V}_h, T\in\mathcal{T}_h, m=0,1,2.\label{Eq: 1-R estimate}
\end{align}

The existence of a reconstruction operator that complies with conditions (i)-(iii) is proven in \cite{linke2016robust}, where additional details are also elaborated.

With the reconstruction operator, the pressure-robust finite element approximation of  (\ref{Eq: Stokes weak 1}) is given by: Find $(\boldsymbol{y}_h, p_h)\in \boldsymbol{V}_h\times Q_h$ such that
\begin{align}\label{Eq: Stokes dis 1}
\begin{aligned}
a(\boldsymbol{y}_h,\boldsymbol{v}_h)-b(\boldsymbol{v}_h,p_h) &= (\boldsymbol{f},\mathcal{R}_h\boldsymbol{v}_h), ~&\forall \boldsymbol{v}_h\in \boldsymbol{V}_h,\\
b(\boldsymbol{y}_h,\phi_h) &= 0, ~&\forall  \phi_h \in Q_h.
\end{aligned}
\end{align}

Recall that the curl operator for the two-dimensional vector $\boldsymbol{v}=(v_1,v_2)\in [H^1(\Omega)]^2$  is defined as
$curl(\boldsymbol{v}):=\frac{\partial v_2}{\partial x}-\frac{\partial v_1}{\partial y}$.

For velocity $\boldsymbol{y}_h$ and forcing function $\boldsymbol{f}$ within equation (\ref{Eq: Stokes dis 1}), 
define a novel \textit{a posteriori} error estimator functional $\varepsilon: \boldsymbol{V}_h\times\boldsymbol{U}\rightarrow \mathbb{R}$ for the Stokes equations  which  is first introduced in \cite{Lederer2017}\textbf{} and defined as 
 \begin{align}
 \begin{aligned}\label{Def: error estimator}
   \varepsilon(\boldsymbol{y}_h,\boldsymbol{f})^2 :=& \frac{1}{\nu^2}\sum_{T\in \mathcal{T}_h}\| h_T^2
curl(\boldsymbol{f}+\nu \Delta\boldsymbol{y}_h)\|_{T}^2+\sum_{E\in \mathcal{E}_h\setminus \partial \Omega}\|h_E^{1/2}[ \nabla \boldsymbol{y}_h\cdot
  \boldsymbol{n}_E]\|_{E}^2\\
    &+\frac{1}{\nu^2}\sum_{E\in \mathcal{E}_h\setminus \partial \Omega}\|h_E^{3/2}[(\boldsymbol{f}+\nu  \Delta\boldsymbol{y}_h)\cdot \boldsymbol{\tau}_E]\|_{E}^2+\|\nabla\cdot\boldsymbol{y}_h\|^2.
    \end{aligned}
 \end{align}

The subsequent theorem provides the upper and lower bounds for the \textit{a posteriori} velocity error estimate.

\begin{theorem}[see \cite{ledererRefinedPosterioriError2019}]\label{thm1}
Let $(\boldsymbol{y}, p)$ denote the solutions of equations (\ref{Eq: Stokes weak 1}). Assume that $\boldsymbol{y}_h$ represents the discrete velocity of equations (\ref{Eq: Stokes dis 1}). Then the error estimator $\varepsilon(\boldsymbol{y}_h,\boldsymbol{f})$
satisfies 
\begin{equation}\label{Eq: Stokes reliability}
\|\nabla(\boldsymbol{y}-\boldsymbol{y}_h)\|\leq \varepsilon(\boldsymbol{y}_h,\boldsymbol{f}),
\end{equation}
and
\begin{equation}\label{Eq: Stokes efficiency}
\varepsilon(\boldsymbol{y}_h,\boldsymbol{f})\lesssim \|\nabla(\boldsymbol{y}-\boldsymbol{y}_h)\|+OSC_{2}(\boldsymbol{f},p,\boldsymbol{y}_h),
\end{equation}
where the higher order term is defined as
\begin{align}
\begin{aligned}\label{Def: higher order term}
OSC_{2}(\boldsymbol{f},p,\boldsymbol{y}_h):=&\frac{1}{\nu}(osc(\boldsymbol{f}-\nabla p, \mathcal{T}_h)+osc(\boldsymbol{f}-\nabla p, \mathcal{T}_h,1)\\
& +osc(curl(\boldsymbol{f}+\nu \triangle \boldsymbol{y}_h),\mathcal{T}_h) +osc(\boldsymbol{f}\cdot \tau_E,\mathcal{E}_h)).
\end{aligned}
\end{align}

\end{theorem}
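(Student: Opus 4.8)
The plan is to establish the two inequalities (\ref{Eq: Stokes reliability}) and (\ref{Eq: Stokes efficiency}) separately, with reliability resting on a \emph{pressure-free} residual identity on the divergence-free subspace and efficiency following from a Verf\"urth-type bubble-function localization. For reliability I would first split the velocity error $\boldsymbol{e}:=\boldsymbol{y}-\boldsymbol{y}_h$. Since $\nabla\cdot\boldsymbol{y}=0$, we have $\nabla\cdot\boldsymbol{e}=-\nabla\cdot\boldsymbol{y}_h$, so using a right inverse of the divergence (equivalently the inf-sup condition) I would produce $\boldsymbol{e}_2\in\boldsymbol{V}$ with $\nabla\cdot\boldsymbol{e}_2=-\nabla\cdot\boldsymbol{y}_h$ and $\|\nabla\boldsymbol{e}_2\|\lesssim\|\nabla\cdot\boldsymbol{y}_h\|$, so that $\boldsymbol{e}_1:=\boldsymbol{e}-\boldsymbol{e}_2$ is divergence-free; the contribution of $\boldsymbol{e}_2$ is then absorbed by the term $\|\nabla\cdot\boldsymbol{y}_h\|$ already present in $\varepsilon$. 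For the divergence-free part I would write $\nu\|\nabla\boldsymbol{e}_1\|=\sup_{\boldsymbol{v}}a(\boldsymbol{e}_1,\boldsymbol{v})/\|\nabla\boldsymbol{v}\|$, the supremum taken over divergence-free $\boldsymbol{v}\in\boldsymbol{V}$, and exploit that $b(\boldsymbol{v},p)=0$ for such $\boldsymbol{v}$, so that $a(\boldsymbol{e},\boldsymbol{v})=(\boldsymbol{f},\boldsymbol{v})-a(\boldsymbol{y}_h,\boldsymbol{v})$ is independent of the pressure. This is precisely the mechanism producing pressure-robustness.

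Next I would insert the discrete equation (\ref{Eq: Stokes dis 1}) tested on a divergence-preserving quasi-interpolant $\boldsymbol{v}_h\in\boldsymbol{V}_h^0$ of $\boldsymbol{v}$ (a Fortin-type operator). Since $b(\boldsymbol{v}_h,p_h)=0$ on $\boldsymbol{V}_h^0$, one has $a(\boldsymbol{y}_h,\boldsymbol{v}_h)=(\boldsymbol{f},\mathcal{R}_h\boldsymbol{v}_h)$, and subtracting yields the consistency identity $a(\boldsymbol{e},\boldsymbol{v})=(\boldsymbol{f},\boldsymbol{v}-\mathcal{R}_h\boldsymbol{v}_h)-a(\boldsymbol{y}_h,\boldsymbol{v}-\boldsymbol{v}_h)$. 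Integrating the last term by parts elementwise produces the element residual $\boldsymbol{f}+\nu\Delta\boldsymbol{y}_h$ and the normal-jump term $[\nabla\boldsymbol{y}_h\cdot\boldsymbol{n}_E]$. The crucial step, which I expect to be the main obstacle, is the treatment of the volume contribution: because both $\boldsymbol{v}$ and $\mathcal{R}_h\boldsymbol{v}_h$ are exactly divergence-free (property (\ref{Eq: div})), their difference is divergence-free and can be written as a rotated gradient $\nabla^\perp\xi:=(-\partial_y\xi,\partial_x\xi)$; integrating by parts against $\boldsymbol{f}+\nu\Delta\boldsymbol{y}_h$ converts the volume term into $\mathrm{curl}(\boldsymbol{f}+\nu\Delta\boldsymbol{y}_h)$ tested on the potential $\xi$ and simultaneously generates the tangential-jump term $[(\boldsymbol{f}+\nu\Delta\boldsymbol{y}_h)\cdot\boldsymbol{\tau}_E]$. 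Here the pressure gradient is annihilated, since $\mathrm{curl}\,\nabla p_h=0$, which is exactly why $\varepsilon$ in (\ref{Def: error estimator}) carries $\mathrm{curl}(\boldsymbol{f}+\nu\Delta\boldsymbol{y}_h)$ in place of the classical $\boldsymbol{f}-\nabla p_h+\nu\Delta\boldsymbol{y}_h$.

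To close the reliability bound I would use the consistency property (\ref{Eq: reconstruction operator def2}) to subtract elementwise constants at no cost, invoke the approximation estimates (\ref{Eq: 1-R estimate}) and (\ref{Eq: interpolation estimate}) for $\boldsymbol{v}-\mathcal{R}_h\boldsymbol{v}_h$, $\boldsymbol{v}-\boldsymbol{v}_h$, and the potential $\xi$ (whose $L^2$ bound carries the extra power of $h_T$ that accounts for the $h_T^2$ weight on the curl term), apply the Cauchy--Schwarz inequality element- and edge-wise, and finally divide by $\nu$ to recover the $1/\nu^2$ weights of (\ref{Def: error estimator}). For efficiency I would follow the bubble-function technique: for each element I would test $\mathrm{curl}(\boldsymbol{f}+\nu\Delta\boldsymbol{y}_h)$ against $\nabla^\perp\!\big(b_T\,\Pi_k\,\mathrm{curl}(\boldsymbol{f}+\nu\Delta\boldsymbol{y}_h)\big)$ with an interior bubble $b_T$; since $\nabla^\perp(\cdot)$ is divergence-free the pressure again drops out, and integrating back by parts together with the consistency identity bounds the volume term by the local energy error $\|\nabla(\boldsymbol{y}-\boldsymbol{y}_h)\|_T$ plus the oscillation contributions collected in $OSC_2$ of (\ref{Def: higher order term}). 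An analogous edge-bubble argument controls the normal- and tangential-jump terms, while the divergence term is immediate from $\|\nabla\cdot\boldsymbol{y}_h\|=\|\nabla\cdot(\boldsymbol{y}-\boldsymbol{y}_h)\|\le\|\nabla(\boldsymbol{y}-\boldsymbol{y}_h)\|$ because $\nabla\cdot\boldsymbol{y}=0$. Summing over all elements and edges, using the finite overlap of bubble supports, then yields (\ref{Eq: Stokes efficiency}).
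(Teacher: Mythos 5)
The paper does not actually prove this theorem: it is imported from Lederer--Merdon--Sch\"oberl \cite{ledererRefinedPosterioriError2019} (the remark immediately after the statement points to Theorem~5 there, with $\sigma=\nabla\boldsymbol{u}_h$ and the dual-norm term bounded by the volumetric contribution), so the only meaningful comparison is with that reference --- and your sketch reconstructs essentially its argument: the divergence correction that isolates $\|\nabla\cdot\boldsymbol{y}_h\|$, the pressure-free dual norm of the residual over exactly divergence-free test functions, the rotated-gradient/stream-function integration by parts that converts the volume residual into $\mathrm{curl}(\boldsymbol{f}+\nu\Delta\boldsymbol{y}_h)$ plus the tangential jumps with the $h_T^2$ and $h_E^{3/2}$ weights, and bubble-function localization for efficiency. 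Two technical repairs are needed to make your outline airtight. First, the representation $\boldsymbol{v}-\mathcal{R}_h\boldsymbol{v}_h=\nabla^\perp\xi$ requires $\Omega$ simply connected (and uses the vanishing normal trace in \eqref{Eq: div}); moreover $\mathcal{R}_h\boldsymbol{v}_h$ is only $H(\mathrm{div})$-conforming, so $\xi\in H^1$ rather than $H^2$, and the extra power of $h_T$ must be extracted by a Cl\'ement-type step applied to $\xi$ together with the local bound $\|\nabla\xi\|_{0,T}=\|\boldsymbol{v}-\mathcal{R}_h\boldsymbol{v}_h\|_{0,T}\lesssim h_T\|\nabla\boldsymbol{v}\|_{\omega_T}$, which is what you gesture at but should state. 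Second, in the efficiency step the field $\nabla^\perp\bigl(b_T\,\Pi_k\,\mathrm{curl}(\boldsymbol{f}+\nu\Delta\boldsymbol{y}_h)\bigr)$ is \emph{not} an admissible $H^1_0$ test function: $b_T$ vanishes on $\partial T$ but $\nabla b_T$ does not, so the zero extension of this rotated gradient is discontinuous across $\partial T$; you must use $b_T^2$ (or a comparable higher-order bubble) so that the potential and its full gradient vanish on $\partial T$, after which the pressure still drops out and the standard inverse estimates close the bound. Finally, note that your argument --- like the cited proof --- delivers reliability only up to a generic constant, i.e.\ $\lesssim$, whereas \eqref{Eq: Stokes reliability} displays a bare $\leq$; the paper's transcription silently absorbs that constant, so this is a defect of the statement rather than of your proof.
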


\begin{remark}
The \textit{a posteriori} error estimator $\varepsilon(\boldsymbol{y}_h,\boldsymbol{f})$ is derived from Theorem~5 in  \cite{ledererRefinedPosterioriError2019}, where $``\sigma=\nabla\boldsymbol{u}_h"$ (as indicated in \cite{ledererRefinedPosterioriError2019}). In this context, the term involving the dual norm can be bounded above by the volumetric term (refer to page 16 in \cite{ledererRefinedPosterioriError2019}).
\end{remark}

\subsection{Pressure-robust approximation and improved \textit{a posteriori} error estimation for the optimal control problem}
In this part, we construct \textit{a posteriori} error estimator for the pressure-robust finite element
approximation of the optimal control problem. Compared to Stokes equations, it is not trivial to derive \textit{a posteriori} error bounds for its optimal control formulation, as the latter generates a complex coupled discretized system. Our error estimator consists of three contributions, which are related to the discretization error of the optimality conditions, and the discretization errors of the state and the adjoint equations.

With the velocity reconstruction operator, the pressure-robust finite element approximation for the Stokes optimal control problem (\ref{Eq: cost function weak})-(\ref{Eq: Stokes weak}) is: Find the optimal  $(\boldsymbol{y}_h,p_h,\boldsymbol{u}_h)\in \boldsymbol{V}_h\times Q_h\times \boldsymbol{U}_h$ such that
\begin{align} 
&\min J_h(\boldsymbol{y}_h,\boldsymbol{u}_h):=\frac{1}{2}\|\mathcal{R}_h\boldsymbol{y}_h-\boldsymbol{y_d}\|^2+\frac{\alpha}{2}\|\boldsymbol{u}_h\|^2
 \label{Eq: cost function finite}\\
s.t.&\left\{\begin{aligned}\label{Eq: Stokes finite}
a(\boldsymbol{y}_h,\boldsymbol{v}_h)-b(\boldsymbol{v}_h,p_h) &= (\boldsymbol{u}_h,\mathcal{R}_h\boldsymbol{v}_h), &~\forall \boldsymbol{v}_h\in \boldsymbol{V}_h,\\
b(\boldsymbol{y}_h,\phi_h) &= 0, &~\forall  \phi_h \in Q_h.
\end{aligned}
\right.
\end{align}

\begin{remark}
Compared to the cost functional (\ref{Eq: cost function weak}), we add the reconstruction operator to the discrete velocity $\boldsymbol{y}_h$ in ( \ref{Eq: cost function finite}), following the same procedure as in \cite{merdon2023pressure}. This modification is crucial to ensure that the adjoint equation of the problem is also endowed with a pressure-robust finite element discretization, which will be demonstrated in the following lemma.
\end{remark}

For the discrete  optimal control problem, we can derive the following conclusions regarding the existence and uniqueness of the optimal solution, as well as the first-order optimality conditions.

\begin{lemma}\label{Thm: opt sys dis}
The control problem (\ref{Eq: cost function finite})-(\ref{Eq: Stokes finite}) has a unique optimal solution $(\bar{\boldsymbol{y}}_h, \bar{p}_h,\bar{\boldsymbol{u}}_h) \in \boldsymbol{V}_h\times Q_h\times \boldsymbol{U}_h$, which satisfies the following  optimality condition:
\begin{align}\label{Eq: necessary condition dis}
 (\alpha\bar{\boldsymbol{u}}_h+\mathcal{R}_h\bar{\boldsymbol{z}}_h,\boldsymbol{\mu}_h)=0,\quad \forall \boldsymbol{\mu}_h\in \boldsymbol{U}_h,
\end{align}
and the state equations:
\begin{align}\label{Eq: Stokes finite 1}
\begin{aligned}
a(\bar{\boldsymbol{y}}_h,\boldsymbol{v}_h)-b(\boldsymbol{v}_h,\bar{p}_h) &= (\bar{\boldsymbol{u}}_h,\mathcal{R}_h\boldsymbol{v}_h), &~\forall \boldsymbol{v}_h\in \boldsymbol{V}_h,\\
b(\bar{\boldsymbol{y}}_h,\phi_h) &= 0, &~\forall  \phi_h \in Q_h,
\end{aligned}
\end{align}
and the adjoint equations:
\begin{align}\label{Eq: opt sys dis}
\begin{aligned}
a(\boldsymbol{w}_h,\bar{\boldsymbol{z}}_h)+b(\boldsymbol{w}_h,\bar{r}_h)&= (\mathcal{R}_h\bar{\boldsymbol{y}}_h-\boldsymbol{y_d},\mathcal{R}_h\boldsymbol{w}_h), &~\forall \boldsymbol{w}_h\in \boldsymbol{V}_h,\\
b(\bar{\boldsymbol{z}}_h,\psi_h)&= 0, &~\forall  \psi_h \in Q_h,
\end{aligned}
\end{align}
where $\bar{\boldsymbol{z}}_h\in \boldsymbol{V}_h$ and $\bar{r}_h\in Q_h$ are the optimal adjoint velocity and adjoint pressure respectively.

\end{lemma}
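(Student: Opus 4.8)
The plan is to use the standard reduced-cost-functional technique for linear-quadratic PDE-constrained optimization, taking care to carry the reconstruction operator $\mathcal{R}_h$ through every step. First I would show that the discrete state system (\ref{Eq: Stokes finite 1}) induces a well-defined control-to-state map. For a fixed $\boldsymbol{u}_h\in\boldsymbol{U}_h$, the right-hand side $\boldsymbol{v}_h\mapsto(\boldsymbol{u}_h,\mathcal{R}_h\boldsymbol{v}_h)$ is a bounded linear functional on $\boldsymbol{V}_h$, since $\mathcal{R}_h$ is bounded by (\ref{Eq: 1-R estimate}); combined with the coercivity of $a(\cdot,\cdot)$ on $\boldsymbol{V}_h^0$ (via Poincar\'e) and the discrete inf-sup condition stated for $(\boldsymbol{V}_h,Q_h)$, the Brezzi saddle-point theory yields a unique $(\boldsymbol{y}_h,p_h)$. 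Write $S_h:\boldsymbol{U}_h\to\boldsymbol{V}_h^0$ for the resulting linear solution operator $\boldsymbol{u}_h\mapsto\boldsymbol{y}_h$.

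Next I would substitute $S_h$ into the objective to form the reduced functional
\[
\hat{J}_h(\boldsymbol{u}_h):=\tfrac{1}{2}\|\mathcal{R}_h S_h\boldsymbol{u}_h-\boldsymbol{y}_d\|^2+\tfrac{\alpha}{2}\|\boldsymbol{u}_h\|^2,
\]
which is a quadratic functional on the finite-dimensional space $\boldsymbol{U}_h$. Because $\alpha>0$, the quadratic part is positive definite, so $\hat{J}_h$ is strictly convex and coercive and hence admits a unique minimizer $\bar{\boldsymbol{u}}_h$; the associated $\bar{\boldsymbol{y}}_h=S_h\bar{\boldsymbol{u}}_h$ and its pressure $\bar{p}_h$ then furnish the unique optimal triple, giving existence, uniqueness, and the state equations (\ref{Eq: Stokes finite 1}).

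For the optimality condition I would differentiate $\hat{J}_h$ at $\bar{\boldsymbol{u}}_h$ in a direction $\boldsymbol{\mu}_h$, obtaining $(\mathcal{R}_h\bar{\boldsymbol{y}}_h-\boldsymbol{y}_d,\mathcal{R}_h\delta\boldsymbol{y}_h)+\alpha(\bar{\boldsymbol{u}}_h,\boldsymbol{\mu}_h)$, where $\delta\boldsymbol{y}_h=S_h\boldsymbol{\mu}_h$ solves the state equation with control $\boldsymbol{\mu}_h$. To eliminate the implicit dependence on $\delta\boldsymbol{y}_h$, I would introduce the adjoint pair $(\bar{\boldsymbol{z}}_h,\bar{r}_h)$ defined by (\ref{Eq: opt sys dis}), test it with $\boldsymbol{w}_h=\delta\boldsymbol{y}_h$, and test the state equation for $\delta\boldsymbol{y}_h$ with $\boldsymbol{v}_h=\bar{\boldsymbol{z}}_h$. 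The two pressure-coupling terms $b(\delta\boldsymbol{y}_h,\bar{r}_h)$ and $b(\bar{\boldsymbol{z}}_h,\delta p_h)$ vanish because both $\delta\boldsymbol{y}_h$ and $\bar{\boldsymbol{z}}_h$ are discretely divergence free while $\bar{r}_h,\delta p_h\in Q_h$; this produces the consistency identity $(\mathcal{R}_h\bar{\boldsymbol{y}}_h-\boldsymbol{y}_d,\mathcal{R}_h\delta\boldsymbol{y}_h)=a(\delta\boldsymbol{y}_h,\bar{\boldsymbol{z}}_h)=(\boldsymbol{\mu}_h,\mathcal{R}_h\bar{\boldsymbol{z}}_h)$. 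Setting the derivative to zero for all $\boldsymbol{\mu}_h\in\boldsymbol{U}_h$ then yields exactly (\ref{Eq: necessary condition dis}).

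I expect the main obstacle to be the correct bookkeeping of $\mathcal{R}_h$ in the adjoint identity: the reconstruction operator must land symmetrically on both the tracking misfit and the control test function so that the right-hand side of (\ref{Eq: opt sys dis}) carries $\mathcal{R}_h$ in both slots. This is precisely why $\mathcal{R}_h$ is applied to $\boldsymbol{y}_h$ inside the cost functional (\ref{Eq: cost function finite}), as noted in the preceding remark; without that modification the adjoint discretization would forfeit pressure-robustness. The remaining technical care is to confirm that the cross $b$-terms cancel \emph{exactly} through the discrete divergence-free constraints, rather than merely vanishing up to higher-order terms.
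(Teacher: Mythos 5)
Your proposal is correct and takes essentially the same route as the paper's (sketched) proof: the paper likewise obtains existence and uniqueness from continuity, strict convexity, and coercivity of the cost functional via standard linear-quadratic optimization results, and derives the optimality system by the sensitivity approach, i.e., exactly your control-to-state map $S_h$, reduced functional, and adjoint representation of the derivative. Your version simply fills in the standard details the paper delegates to references (Brezzi well-posedness of the discrete saddle-point systems, the cancellation of the $b$-terms through the discrete divergence-free constraints, and the symmetric placement of $\mathcal{R}_h$ in the adjoint right-hand side), and these details all check out.
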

\begin{proof}[Sketch of a proof]
Employing the linear characteristics of the divergence-free reconstruction operator, it is easy to prove that the optimal control problem is continuous, strictly convex, and radially unbounded. By some standard techniques ( see \cite{de2015numerical}, Theorem 3.1 and \cite{hinze2008optimization}, Theorem 1.43), we can establish that problem (\ref{Eq: cost function finite})-(\ref{Eq: Stokes finite}) admits a unique optimal control. And the first-order necessary and sufficient condition can be proved by the sensitivity approach (see \cite{troltzsch2024optimal}).
\end{proof}

Now, we focus on the pressure-robust \textit{a posteriori} error estimation for the discrete optimal control problem.
Confirm that $\bar{\boldsymbol{u}}_h,\bar{\boldsymbol{y}}_h,\bar{\boldsymbol{z}}_h$ represent the optimal control, optimal velocity, and optimal adjoint state variable for the problem (\ref{Eq: cost function finite})-(\ref{Eq: Stokes finite}), respectively.
 Define \textit{a posteriori} error estimator to the optimal control problem (\ref{Eq: cost function weak})-(\ref{Eq: Stokes weak}) and its formulation (\ref{Eq: cost function finite})-(\ref{Eq: Stokes finite}) as
 \begin{align}\label{Def: e gl}
\varepsilon_{pr}:=
\varepsilon_0(\bar{\boldsymbol{z}}_h,\bar{\boldsymbol{u}}_h)+
\varepsilon(\bar{\boldsymbol{y}}_h,\bar{\boldsymbol{u}}_h)+\varepsilon(\bar{\boldsymbol{z}}_h,\mathcal{R}_h\bar{\boldsymbol{y}}_h-\boldsymbol{y_d}),
 \end{align}
where the complete expression of $\varepsilon(\bar{\boldsymbol{y}}_h,\bar{\boldsymbol{u}}_h)$ and $\varepsilon(\bar{\boldsymbol{z}}_h,\mathcal{R}_h\bar{\boldsymbol{y}}_h-\boldsymbol{y_d})$ are presented in equation (\ref{Def: error estimator}). And the definition of $\varepsilon_0(\bar{\boldsymbol{z}}_h,\bar{\boldsymbol{u}}_h)$ is given by
\begin{align}\label{Def: e 0}
\varepsilon_0(\bar{\boldsymbol{z}}_h,\bar{\boldsymbol{u}}_h):=&\|\frac{1}{\alpha}\bar{\boldsymbol{z}}_h+\bar{\boldsymbol{u}}_h\|.
\end{align}

To estimate the \textit{a posteriori} error, we introduce some new variables that will be used . The first one is  the variable $\tilde{\boldsymbol{u}}$ as
\begin{align}\label{Def: u tilde}
\tilde{\boldsymbol{u}}=-\frac{1}{\alpha}\boldsymbol{\bar{z}}_h. 
\end{align}
Then establish $(\tilde{\boldsymbol{y}}, \tilde{p})\in \boldsymbol{V}\times Q$ and $(\tilde{\boldsymbol{z}}, \tilde{r})\in \boldsymbol{V}\times Q$, which solve the following equations,
\begin{align}\label{Eq: opt sys tilde}
\begin{aligned}
a(\tilde{\boldsymbol{y}},\boldsymbol{v})-b(\boldsymbol{v},\tilde{p})&= (\tilde{\boldsymbol{u}},\boldsymbol{v}), &~\forall \boldsymbol{v}\in \boldsymbol{V},\\
b(\tilde{\boldsymbol{y}},\phi)&= 0, &~\forall  \phi \in Q,\\
a(\boldsymbol{w},\tilde{\boldsymbol{z}})+b(\boldsymbol{w},\tilde{r})&= (\tilde{\boldsymbol{y}}-\boldsymbol{y_d},\boldsymbol{w}), &~\forall \boldsymbol{w}\in \boldsymbol{V},\\
b(\tilde{\boldsymbol{z}},\psi)&= 0, &~\forall  \psi \in Q.
\end{aligned}
\end{align}
We also define the solutions $(\hat{\boldsymbol{y}}, \hat{p})\in \boldsymbol{V}\times Q$ and $(\hat{\boldsymbol{z}}, \hat{r})\in \boldsymbol{V}\times Q$ to the system of equations given in
\begin{align}\label{Eq: opt sys hat}
\begin{aligned}
a(\hat{\boldsymbol{y}},\boldsymbol{v})-b(\boldsymbol{v},\hat{p})&= (\bar{\boldsymbol{u}}_h,\boldsymbol{v}), &~\forall \boldsymbol{v}\in \boldsymbol{V},\\
b(\hat{\boldsymbol{y}},\phi)&= 0, &~\forall  \phi \in Q,\\
a(\boldsymbol{w},\hat{\boldsymbol{z}})+b(\boldsymbol{w},\hat{r})&= (\bar{\boldsymbol{y}}_h-\boldsymbol{y_d},\boldsymbol{w}), &~\forall \boldsymbol{w}\in \boldsymbol{V},\\
b(\hat{\boldsymbol{z}},\psi)&= 0, &~\forall  \psi \in Q.
\end{aligned}
\end{align}
Lastly, we introduce $(\check{\boldsymbol{z}}, \check{r})\in \boldsymbol{V}\times Q$, which satisfies the adjoint system outlined in
\begin{align}\label{Eq: adjoint check}
\begin{aligned}
a(\boldsymbol{w},\check{\boldsymbol{z}})+b(\boldsymbol{w},\check{r})&= (\mathcal{R}_h\bar{\boldsymbol{y}}_h-\boldsymbol{y_d},\boldsymbol{w}), &~\forall \boldsymbol{w}\in \boldsymbol{V},\\
b(\check{\boldsymbol{z}},\psi)&= 0, &~\forall  \psi \in Q.
\end{aligned}
\end{align}

The following theorem provides the global upper bounds for control variables, velocities, and the adjoint state. 


\begin{theorem}\label{thm2}
Recall that $(\bar{\boldsymbol{u}},\bar{\boldsymbol{y}},\bar{\boldsymbol{z}})$ is the solution to the problem (\ref{Eq: opt sys}), and $(\bar{\boldsymbol{u}}_h, \bar{\boldsymbol{y}}_h, \bar{\boldsymbol{z}}_h)$ is the solution to (\ref{Eq: necessary condition dis})-(\ref{Eq: opt sys dis}). Then there holds

\begin{align*}
\|\bar{\boldsymbol{u}}-\bar{\boldsymbol{u}}_h\|
+\|\nabla(\bar{\boldsymbol{y}}-\bar{\boldsymbol{y}}_h)\|
+\|\nabla(\bar{\boldsymbol{z}}-\bar{\boldsymbol{z}}_h)\|\lesssim \varepsilon_{pr}+OSC_3,
\end{align*}
where the higher order term is defined as
\begin{align}\label{eqn:OSC_3}
OSC_3=\left(\sum_{T\in\mathcal{T}_h}h_T^6|\bar{\boldsymbol{y}}_h|_{2,T}^2\right)^{1/2}.
\end{align}

\end{theorem}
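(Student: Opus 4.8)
The plan is to bound the three contributions in a cascade $\|\bar{\boldsymbol u}-\bar{\boldsymbol u}_h\|\to\|\nabla(\bar{\boldsymbol y}-\bar{\boldsymbol y}_h)\|\to\|\nabla(\bar{\boldsymbol z}-\bar{\boldsymbol z}_h)\|$, anchored by a control estimate that does \emph{not} feed the adjoint error back (otherwise the system closes only circularly). Write $S$ for the continuous Stokes solution operator $\boldsymbol f\mapsto\boldsymbol y$ from (\ref{Eq: Stokes weak 1}) and $S^\ast$ for the adjoint solution operator; the symmetry of $a(\cdot,\cdot)$ together with the complementary signs of $b$ in the state and adjoint problems gives $(S^\ast\boldsymbol g,\boldsymbol f)=(\boldsymbol g,S\boldsymbol f)$. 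In this notation the auxiliary fields are $\hat{\boldsymbol y}=S\bar{\boldsymbol u}_h$, $\hat{\boldsymbol z}=S^\ast(\bar{\boldsymbol y}_h-\boldsymbol y_d)$ and $\check{\boldsymbol z}=S^\ast(\mathcal R_h\bar{\boldsymbol y}_h-\boldsymbol y_d)$, while $\bar{\boldsymbol y}=S\bar{\boldsymbol u}$, $\bar{\boldsymbol z}=S^\ast(\bar{\boldsymbol y}-\boldsymbol y_d)$. The three intermediate Stokes/adjoint pairs are chosen so that each consecutive difference is either a continuous-to-continuous comparison (controlled by stability, Lemma~\ref{Lem: stability estimate}) or a continuous-to-discrete comparison with identical data (controlled by the pressure-robust reliability, Theorem~\ref{thm1}).

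For the control error I would start from $\bar{\boldsymbol u}=-\tfrac1\alpha\bar{\boldsymbol z}$ in (\ref{Eq: 1 order necessary optimal condition}) and expand $\alpha\|\bar{\boldsymbol u}-\bar{\boldsymbol u}_h\|^2=-(\bar{\boldsymbol z}+\alpha\bar{\boldsymbol u}_h,\bar{\boldsymbol u}-\bar{\boldsymbol u}_h)$, then insert $\hat{\boldsymbol z}$ and $\bar{\boldsymbol z}_h$ to split the right-hand side as $-(\bar{\boldsymbol z}-\hat{\boldsymbol z},\cdot)-(\hat{\boldsymbol z}-\bar{\boldsymbol z}_h,\cdot)-(\bar{\boldsymbol z}_h+\alpha\bar{\boldsymbol u}_h,\cdot)$. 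The decisive term is the first: since $\bar{\boldsymbol z}-\hat{\boldsymbol z}=S^\ast(\bar{\boldsymbol y}-\bar{\boldsymbol y}_h)$ and $\bar{\boldsymbol y}-\bar{\boldsymbol y}_h=S(\bar{\boldsymbol u}-\bar{\boldsymbol u}_h)+(\hat{\boldsymbol y}-\bar{\boldsymbol y}_h)$, self-adjointness gives $-(\bar{\boldsymbol z}-\hat{\boldsymbol z},\bar{\boldsymbol u}-\bar{\boldsymbol u}_h)=-\|S(\bar{\boldsymbol u}-\bar{\boldsymbol u}_h)\|^2-(\hat{\boldsymbol y}-\bar{\boldsymbol y}_h,S(\bar{\boldsymbol u}-\bar{\boldsymbol u}_h))$. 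The sign-definite piece is dropped and the remainder is estimated by $\|\hat{\boldsymbol y}-\bar{\boldsymbol y}_h\|\lesssim\varepsilon(\bar{\boldsymbol y}_h,\bar{\boldsymbol u}_h)$ (Theorem~\ref{thm1}, as $\hat{\boldsymbol y}$ and $\bar{\boldsymbol y}_h$ solve the same Stokes problem with data $\bar{\boldsymbol u}_h$). The last term is $\le\alpha\varepsilon_0\|\bar{\boldsymbol u}-\bar{\boldsymbol u}_h\|$ by (\ref{Def: e 0}), and the middle term is $\le\|\hat{\boldsymbol z}-\bar{\boldsymbol z}_h\|\,\|\bar{\boldsymbol u}-\bar{\boldsymbol u}_h\|$, handled below. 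Cancelling one factor yields $\|\bar{\boldsymbol u}-\bar{\boldsymbol u}_h\|\lesssim\varepsilon_{pr}+OSC_3$ with no occurrence of $\|\nabla(\bar{\boldsymbol z}-\bar{\boldsymbol z}_h)\|$ on the right, which is exactly what breaks the circularity. The velocity error then follows from $\|\nabla(\bar{\boldsymbol y}-\bar{\boldsymbol y}_h)\|\le\|\nabla(\bar{\boldsymbol y}-\hat{\boldsymbol y})\|+\|\nabla(\hat{\boldsymbol y}-\bar{\boldsymbol y}_h)\|\lesssim\|\bar{\boldsymbol u}-\bar{\boldsymbol u}_h\|+\varepsilon(\bar{\boldsymbol y}_h,\bar{\boldsymbol u}_h)$ via Lemma~\ref{Lem: stability estimate} and Theorem~\ref{thm1}.

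For the adjoint error I would again split $\|\nabla(\bar{\boldsymbol z}-\bar{\boldsymbol z}_h)\|\le\|\nabla(\bar{\boldsymbol z}-\check{\boldsymbol z})\|+\|\nabla(\check{\boldsymbol z}-\bar{\boldsymbol z}_h)\|$; the second term is $\le\varepsilon(\bar{\boldsymbol z}_h,\mathcal R_h\bar{\boldsymbol y}_h-\boldsymbol y_d)$ by Theorem~\ref{thm1} applied to (\ref{Eq: opt sys dis}), and the first compares continuous adjoints with data $\bar{\boldsymbol y}-\boldsymbol y_d$ and $\mathcal R_h\bar{\boldsymbol y}_h-\boldsymbol y_d$, so Lemma~\ref{Lem: stability estimate} bounds it by $\|\bar{\boldsymbol y}-\mathcal R_h\bar{\boldsymbol y}_h\|_{-1}\le\|\bar{\boldsymbol y}-\bar{\boldsymbol y}_h\|_{-1}+\|\bar{\boldsymbol y}_h-\mathcal R_h\bar{\boldsymbol y}_h\|_{-1}$. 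The first summand is $\lesssim\|\nabla(\bar{\boldsymbol y}-\bar{\boldsymbol y}_h)\|$ and is already controlled; the reconstruction defect is the key computation: testing against $\boldsymbol v\in\boldsymbol V$ and using the $\mathcal P_0$-orthogonality (\ref{Eq: reconstruction operator def2}) I replace $\boldsymbol v$ by $\boldsymbol v$ minus its elementwise average, so that $(\bar{\boldsymbol y}_h-\mathcal R_h\bar{\boldsymbol y}_h,\boldsymbol v)=\sum_T(\bar{\boldsymbol y}_h-\mathcal R_h\bar{\boldsymbol y}_h,\boldsymbol v-\overline{\boldsymbol v}_T)_T$; estimating the two factors by (\ref{Eq: 1-R estimate}) with $m=2$ and by a local Poincar\'e inequality gains one extra power of $h_T$ and gives $\|\bar{\boldsymbol y}_h-\mathcal R_h\bar{\boldsymbol y}_h\|_{-1}\lesssim\big(\sum_T h_T^6|\bar{\boldsymbol y}_h|_{2,T}^2\big)^{1/2}=OSC_3$. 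The same identity $\hat{\boldsymbol z}-\check{\boldsymbol z}=S^\ast(\bar{\boldsymbol y}_h-\mathcal R_h\bar{\boldsymbol y}_h)$ controls the leftover middle term $\|\hat{\boldsymbol z}-\bar{\boldsymbol z}_h\|\lesssim OSC_3+\varepsilon(\bar{\boldsymbol z}_h,\mathcal R_h\bar{\boldsymbol y}_h-\boldsymbol y_d)$ from Step~1, after which summing the three cascade bounds produces the claim.

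The hard part will be precisely the $OSC_3$ mechanism: a direct $L^2$ bound of $\|\bar{\boldsymbol y}_h-\mathcal R_h\bar{\boldsymbol y}_h\|$ only furnishes an $h_T^4$ weight, and the sixth power is recovered solely by measuring the reconstruction defect in the dual norm $\|\cdot\|_{-1}$ and invoking the mean-preservation property (\ref{Eq: reconstruction operator def2}) to harvest the additional $h_T$. The second delicate point is organizing the control estimate so that the coercivity of the reduced cost surfaces the sign-definite quantity $-\|S(\bar{\boldsymbol u}-\bar{\boldsymbol u}_h)\|^2\le0$; without this the naive inequality $\|\bar{\boldsymbol u}-\bar{\boldsymbol u}_h\|\le\tfrac1\alpha\|\bar{\boldsymbol z}-\bar{\boldsymbol z}_h\|+\varepsilon_0$ reintroduces the adjoint error with an $O(1)$ constant and the three estimates cannot be decoupled. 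A minor verification is that Theorem~\ref{thm1}, stated for the $-b$ form (\ref{Eq: Stokes dis 1}), applies verbatim to the $+b$ adjoint discretization (\ref{Eq: opt sys dis}), which is guaranteed by the sign-robust stability in Lemma~\ref{Lem: stability estimate}.
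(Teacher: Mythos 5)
Your proposal is correct, and it reproduces the paper's overall architecture --- the same auxiliary continuous solutions $\hat{\boldsymbol y}=S\bar{\boldsymbol u}_h$, $\hat{\boldsymbol z}$, $\check{\boldsymbol z}$, the same use of Theorem~\ref{thm1} for continuous-to-discrete comparisons with identical data and of Lemma~\ref{Lem: stability estimate} for continuous-to-continuous comparisons, and the identical $OSC_3$ mechanism (the $\mathcal P_0$-moment property~(\ref{Eq: reconstruction operator def2}) plus the $m=2$ case of~(\ref{Eq: 1-R estimate}) harvesting $h_T^3$ per element) --- but it reorganizes two steps in a way worth noting. First, in the control estimate the paper introduces the auxiliary control $\tilde{\boldsymbol u}=-\bar{\boldsymbol z}_h/\alpha$ together with its full optimality system $(\tilde{\boldsymbol y},\tilde{\boldsymbol z})$ in~(\ref{Eq: opt sys tilde}) and extracts the sign-definite term as $(\bar{\boldsymbol u}-\tilde{\boldsymbol u},\bar{\boldsymbol z}-\tilde{\boldsymbol z})=\|\bar{\boldsymbol y}-\tilde{\boldsymbol y}\|^2\ge 0$, with $\varepsilon_0$ entering via the triangle inequality $\|\bar{\boldsymbol u}-\bar{\boldsymbol u}_h\|\le\|\bar{\boldsymbol u}-\tilde{\boldsymbol u}\|+\varepsilon_0$; you instead expand $\alpha\|\bar{\boldsymbol u}-\bar{\boldsymbol u}_h\|^2=-(\bar{\boldsymbol z}+\alpha\bar{\boldsymbol u}_h,\bar{\boldsymbol u}-\bar{\boldsymbol u}_h)$ and pull out $-\|S(\bar{\boldsymbol u}-\bar{\boldsymbol u}_h)\|^2$ directly through the identity $(S^\ast\boldsymbol g,\boldsymbol f)=(\boldsymbol g,S\boldsymbol f)$, letting $\varepsilon_0$ appear as the residual of the discrete optimality condition. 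This is the same convexity/self-adjointness mechanism, but your version dispenses with one auxiliary system and makes explicit why the adjoint error does not feed back (the paper achieves the same decoupling implicitly). Second, the paper proves $\|\hat{\boldsymbol z}-\check{\boldsymbol z}\|_1\lesssim OSC_3$ by an energy argument --- testing the difference of~(\ref{Eq: opt sys hat}) and~(\ref{Eq: adjoint check}) with $\hat{\boldsymbol z}-\check{\boldsymbol z}$ and inserting the interpolant $\boldsymbol I_0$ --- whereas you package the identical computation as the dual-norm bound $\|\bar{\boldsymbol y}_h-\mathcal R_h\bar{\boldsymbol y}_h\|_{-1}\lesssim OSC_3$ and then invoke the $H^{-1}$ form of the stability lemma (which the paper's own remark after Lemma~\ref{Lem: stability estimate} records); this is slightly more modular, since the one bound serves both your two-term adjoint split $\bar{\boldsymbol z}\to\check{\boldsymbol z}\to\bar{\boldsymbol z}_h$ and the leftover term $\|\hat{\boldsymbol z}-\bar{\boldsymbol z}_h\|$, where the paper uses the three-term split through $\hat{\boldsymbol z}$. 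Both routes give the same estimator $\varepsilon_{pr}$, the same $OSC_3$, and the same $\alpha$-dependence of the constants; your caveat about applying Theorem~\ref{thm1} to the $+b$ adjoint discretization is also handled correctly, exactly as the paper does tacitly.
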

\begin{proof}

\textit{(i)}~
Employing the auxiliary variable $\tilde{\boldsymbol{u}}$, and using the triangle inequality, we have
\begin{align}\label{Eq: temp 3}
\|\bar{\boldsymbol{u}}-\bar{\boldsymbol{u}}_h\|&\leq \|\bar{\boldsymbol{u}}-\tilde{\boldsymbol{u}}\|+\|\tilde{\boldsymbol{u}}-\bar{\boldsymbol{u}}_h\|\
\end{align}

We then estimate the first term $\|\bar{\boldsymbol{u}}-\tilde{\boldsymbol{u}}\|$ using the optimality condition (\ref{Eq: 1 order necessary optimal condition}) and (\ref{Def: u tilde})
\begin{align}\label{Eq: temp 4}
\begin{aligned}
\alpha\|\bar{\boldsymbol{u}}-\tilde{\boldsymbol{u}}\|^2 &= \alpha(\bar{\boldsymbol{u}}-\tilde{\boldsymbol{u}},\bar{\boldsymbol{u}}-\tilde{\boldsymbol{u}})\\
&= (\tilde{\boldsymbol{u}}-\bar{\boldsymbol{u}},\bar{\boldsymbol{z}}-\bar{\boldsymbol{z}}_h)\\
&=  (\tilde{\boldsymbol{u}}-\bar{\boldsymbol{u}},\bar{\boldsymbol{z}}-\tilde{\boldsymbol{z}})+ (\tilde{\boldsymbol{u}}-\bar{\boldsymbol{u}},\tilde{\boldsymbol{z}}-\bar{\boldsymbol{z}}_h)
\end{aligned}
\end{align}

For the term  $(\tilde{\boldsymbol{u}}-\bar{\boldsymbol{u}},\bar{\boldsymbol{z}}-\tilde{\boldsymbol{z}})$, we refer to the technique in \cite{allendes2019adaptive} for estimation, outlined here only for the sake of completeness.
Subtracting (\ref{Eq: opt sys tilde}) from (\ref{Eq: opt sys}), and selecting the test functions $\boldsymbol{v}=\bar{\boldsymbol{z}}-\tilde{\boldsymbol{z}}$, $\phi=\bar{r}-\tilde{r}$, $\boldsymbol{w}=\bar{\boldsymbol{y}}-\tilde{\boldsymbol{y}}$ and $\psi=\bar{p}-\tilde{p}$, it derives the system
\begin{align*}
\begin{aligned}
a(\bar{\boldsymbol{y}}-\tilde{\boldsymbol{y}},\bar{\boldsymbol{z}}-\tilde{\boldsymbol{z}})-b(\bar{\boldsymbol{z}}-\tilde{\boldsymbol{z}},\bar{p}-\tilde{p})&= (\bar{\boldsymbol{u}}-\tilde{\boldsymbol{u}},\bar{\boldsymbol{z}}-\tilde{\boldsymbol{z}}),\\
b(\bar{\boldsymbol{y}}-\tilde{\boldsymbol{y}},\bar{r}-\tilde{r})&= 0,\\
a(\bar{\boldsymbol{y}}-\tilde{\boldsymbol{y}},\bar{\boldsymbol{z}}-\tilde{\boldsymbol{z}})+b(\bar{\boldsymbol{y}}-\tilde{\boldsymbol{y}},\bar{r}-\tilde{r})&= (\bar{\boldsymbol{y}}-\tilde{\boldsymbol{y}},\bar{\boldsymbol{y}}-\tilde{\boldsymbol{y}}),\\
b(\bar{\boldsymbol{z}}-\tilde{\boldsymbol{z}},\bar{p}-\tilde{p})&= 0.
\end{aligned}
\end{align*}
This implies that
\begin{align}\label{Eq: temp 5}
(\bar{\boldsymbol{u}}-\tilde{\boldsymbol{u}},\bar{\boldsymbol{z}}-\tilde{\boldsymbol{z}})
=(\bar{\boldsymbol{y}}-\tilde{\boldsymbol{y}},\bar{\boldsymbol{y}}-\tilde{\boldsymbol{y}})=\|\bar{\boldsymbol{y}}-\tilde{\boldsymbol{y}}\|^2.
\end{align}
By substituting (\ref{Eq: temp 5}) into (\ref{Eq: temp 4}), we get
\begin{align*}
\alpha\|\bar{\boldsymbol{u}}-\tilde{\boldsymbol{u}}\|^2 
=&  (\tilde{\boldsymbol{u}}-\bar{\boldsymbol{u}},\bar{\boldsymbol{z}}-\tilde{\boldsymbol{z}})+ (\tilde{\boldsymbol{u}}-\bar{\boldsymbol{u}},\tilde{\boldsymbol{z}}-\bar{\boldsymbol{z}}_h)\\
=& -\|\bar{\boldsymbol{y}}-\tilde{\boldsymbol{y}}\|^2+ (\tilde{\boldsymbol{u}}-\bar{\boldsymbol{u}},\tilde{\boldsymbol{z}}-\bar{\boldsymbol{z}}_h)\\
\leq& (\tilde{\boldsymbol{u}}-\bar{\boldsymbol{u}},\tilde{\boldsymbol{z}}-\bar{\boldsymbol{z}}_h).
\end{align*}
Then, using H$\ddot{o}$lder inequality it has 
\begin{align}\label{Eq: temp 2}
\begin{aligned}
\alpha\|\bar{\boldsymbol{u}}-\tilde{\boldsymbol{u}}\|&\lesssim \|\tilde{\boldsymbol{z}}-\bar{\boldsymbol{z}}_h\|\\
 &\lesssim \|\tilde{\boldsymbol{z}}-\hat{\boldsymbol{z}}\|+\|\hat{\boldsymbol{z}}-\check{\boldsymbol{z}}\|
 +\|\check{\boldsymbol{z}}-\bar{\boldsymbol{z}}_h\|.
 \end{aligned}
\end{align}

Next, we first estimate the second term on the right-hand side of (\ref{Eq: temp 2}).
Subtracting equation (\ref{Eq: adjoint check}) from the adjoint equations of (\ref{Eq: opt sys hat}) yields
\begin{align*}
\begin{aligned}
a(\boldsymbol{w},\hat{\boldsymbol{z}}-\check{\boldsymbol{z}})+b(\boldsymbol{w},\hat{r}-\check{r})&= (\bar{\boldsymbol{y}}_h-\mathcal{R}_h\bar{\boldsymbol{y}}_h,\boldsymbol{w}), &~\forall \boldsymbol{w}\in \boldsymbol{V},\\
b(\hat{\boldsymbol{z}}-\check{\boldsymbol{z}},\psi)&= 0, &~\forall  \psi \in Q.
\end{aligned}
\end{align*} 
Setting $\boldsymbol{w}=\hat{\boldsymbol{z}}-\check{\boldsymbol{z}}$ and $\psi=\hat{r}-\check{r}$, we find
\begin{align}\label{Eq: temp}
a(\hat{\boldsymbol{z}}-\check{\boldsymbol{z}},\hat{\boldsymbol{z}}-\check{\boldsymbol{z}})= (\bar{\boldsymbol{y}}_h-\mathcal{R}_h\bar{\boldsymbol{y}}_h,\hat{\boldsymbol{z}}-\check{\boldsymbol{z}}). 
\end{align} 
Utilizing (\ref{Eq: reconstruction operator def2}), we can transform equation (\ref{Eq: temp}) into
\begin{align*}
a(\hat{\boldsymbol{z}}-\check{\boldsymbol{z}},\hat{\boldsymbol{z}}-\check{\boldsymbol{z}})= (\bar{\boldsymbol{y}}_h-\mathcal{R}_h\bar{\boldsymbol{y}}_h,(\hat{\boldsymbol{z}}-\check{\boldsymbol{z}})-\emph{I}_{0}(\hat{\boldsymbol{z}}-\check{\boldsymbol{z}})),
\end{align*}
where the definition of  $\emph{ I}_{0}$  is given by $\emph{I}_{k}$ in section 2.2 for $k=0$.

From estimates (\ref{Eq: interpolation estimate}) and (\ref{Eq: 1-R estimate}), the above equation can be deduced that
\begin{align*}
|\hat{\boldsymbol{z}}-\check{\boldsymbol{z}}|_1^2 &\lesssim \sum_{T\in\mathcal{T}_h}\|\bar{\boldsymbol{y}}_h-\mathcal{R}_h\bar{\boldsymbol{y}}_h\|_{0,T}
\|(\hat{\boldsymbol{z}}-\check{\boldsymbol{z}})-\emph{I}_{0}(\hat{\boldsymbol{z}}-\check{\boldsymbol{z}})\|_{0,T}\\
&\lesssim OSC_3\|\hat{\boldsymbol{z}}-\check{\boldsymbol{z}}\|_{1}. 
\end{align*}
Incorporating the Poincar\'{e} inequality, it follows that
\begin{align*}
\|\hat{\boldsymbol{z}}-\check{\boldsymbol{z}}\|_{1}^2
&\lesssim OSC_3\|\hat{\boldsymbol{z}}-\check{\boldsymbol{z}}\|_{1}. 
\end{align*}  
Thereby, we have
\begin{align}\label{Eq: temp 1}
\|\hat{\boldsymbol{z}}-\check{\boldsymbol{z}}\|\leq\|\hat{\boldsymbol{z}}-\check{\boldsymbol{z}}\|_{1}
\lesssim OSC_3. 
\end{align}   
 
Referring to (\ref{Eq: temp 2}), we now present a complete estimate, utilizing
Lemma~\ref{Lem: stability estimate}, equation (\ref{Eq: temp 1}), the Poincar\'{e} inequality and Theorem~\ref{thm1}.
\begin{align*}
\alpha\|\bar{\boldsymbol{u}}-\tilde{\boldsymbol{u}}\|\lesssim& \|\tilde{\boldsymbol{z}}-\bar{\boldsymbol{z}}_h\|\\
 \leq&\|\tilde{\boldsymbol{z}}-\hat{\boldsymbol{z}}\|+\|\hat{\boldsymbol{z}}-\check{\boldsymbol{z}}\|
 +\|\check{\boldsymbol{z}}-\bar{\boldsymbol{z}}_h\|\\
\lesssim& \|\tilde{\boldsymbol{z}}-\hat{\boldsymbol{z}}\|+\|\hat{\boldsymbol{z}}-\check{\boldsymbol{z}}\|+\|\nabla(\check{\boldsymbol{z}}-\bar{\boldsymbol{z}} _h)\|\\
\lesssim& \|\tilde{\boldsymbol{y}}-\bar{\boldsymbol{y}}_h\| + OSC_3+\varepsilon(\bar{\boldsymbol{z}}_h,\mathcal{R}_h\bar{\boldsymbol{y}}_h-\boldsymbol{y_d})\\
\leq& \|\tilde{\boldsymbol{y}}-\hat{\boldsymbol{y}}\|+\|\hat{\boldsymbol{y}}-\bar{\boldsymbol{y}}_h\| + OSC_3+ \varepsilon(\bar{\boldsymbol{z}}_h,\mathcal{R}_h\bar{\boldsymbol{y}}_h-\boldsymbol{y_d})\\
\leq& \|\tilde{\boldsymbol{y}}-\hat{\boldsymbol{y}}\|+\|\nabla(\hat{\boldsymbol{y}}-\bar{\boldsymbol{y}}_h)\| + OSC_3+ \varepsilon(\bar{\boldsymbol{z}}_h,\mathcal{R}_h\bar{\boldsymbol{y}}_h-\boldsymbol{y_d})\\
\lesssim& \|\tilde{\boldsymbol{u}}-\bar{\boldsymbol{u}}_h\|+\varepsilon(\bar{\boldsymbol{y}}_h,\bar{\boldsymbol{u}}_h)+
OSC_3+\varepsilon(\bar{\boldsymbol{z}}_h,\mathcal{R}_h\bar{\boldsymbol{y}}_h-\boldsymbol{y_d}).
\end{align*}

Notice that the second term on the right-hand side of (\ref{Eq: temp 3}) is the error estimator 
defined in (\ref{Def: e 0}), based on the definition of $\tilde{\boldsymbol{u}}$ provided in (\ref{Def: u tilde}).
Thus, we can finally arrive at
\begin{align*}
\|\bar{\boldsymbol{u}}-\bar{\boldsymbol{u}}_h\|\lesssim   (1+\alpha^{-1})\varepsilon_0(\bar{\boldsymbol{z}}_h,\bar{\boldsymbol{u}}_h)
+\alpha^{-1}\left(\varepsilon(\bar{\boldsymbol{y}}_h,\bar{\boldsymbol{u}}_h)
+OSC_3
+\varepsilon(\bar{\boldsymbol{z}}_h,\mathcal{R}_h\bar{\boldsymbol{y}}_h-\boldsymbol{y_d})\right).
\end{align*}

\textit{(ii)}~Using the conclusion from \textit{(i)}, and combining it with Lemma~\ref{Lem: stability estimate} and Theorem~\ref{thm1}, the proof can be easily accomplished
\begin{align*} 
\|\nabla(\bar{\boldsymbol{y}}-\bar{\boldsymbol{y}}_h)\|\leq&
\|\nabla(\bar{\boldsymbol{y}}-\hat{\boldsymbol{y}})\|+\|\nabla(\hat{\boldsymbol{y}}-\bar{\boldsymbol{y}}_h)\|\\
\lesssim&  \|\bar{\boldsymbol{u}}-\bar{\boldsymbol{u}}_h\|+ \varepsilon(\bar{\boldsymbol{y}}_h,\bar{\boldsymbol{u}}_h)\\
\leq&  (1+\alpha^{-1})\left[\varepsilon_0(\bar{\boldsymbol{z}}_h,\bar{\boldsymbol{u}}_h)
+\varepsilon(\bar{\boldsymbol{y}}_h,\bar{\boldsymbol{u}}_h)\right]\\
&+\alpha^{-1}\left(\varepsilon(\bar{\boldsymbol{z}}_h,\mathcal{R}_h\bar{\boldsymbol{y}}_h-\boldsymbol{y_d})
+OSC_3\right).
\end{align*}

\textit{(iii)}~ 
Similar to the proof of \textit{(ii)}, additionally employing (\ref{Eq: temp 1}), we can deduce that
\begin{align*}
\|\nabla(\bar{\boldsymbol{z}}-\bar{\boldsymbol{z}}_h)\|\leq&
\|\nabla(\bar{\boldsymbol{z}}-\hat{\boldsymbol{z}})\|+
\|\nabla(\hat{\boldsymbol{z}}-\check{\boldsymbol{z}})\|+\|\nabla(\check{\boldsymbol{z}}-\bar{\boldsymbol{z}}_h)\|\\
\lesssim&  \|\bar{\boldsymbol{y}}-\bar{\boldsymbol{y}}_h\|+OSC_3+ \varepsilon(\bar{\boldsymbol{z}}_h,\mathcal{R}_h\bar{\boldsymbol{y}}_h-\boldsymbol{y_d})\\
\leq&  (1+\alpha^{-1})\left(\varepsilon_0(\bar{\boldsymbol{z}}_h,\bar{\boldsymbol{u}}_h)+
\varepsilon(\bar{\boldsymbol{y}}_h,\bar{\boldsymbol{u}}_h)
+\varepsilon(\bar{\boldsymbol{z}}_h,\mathcal{R}_h
\bar{\boldsymbol{y}}_h-\boldsymbol{y_d})\right)
+\alpha^{-1}OSC_3.
\end{align*}
\end{proof}

The proof of the global lower bound for the \textit{a posteriori} error estimate is straightforward, as outlined below.
\begin{theorem}\label{thm4}
Let $\bar{\boldsymbol{u}},\bar{\boldsymbol{y}},\bar{\boldsymbol{z}}$ be the solutions of problem (\ref{Eq: opt sys}), and $\bar{\boldsymbol{u}}_h, \bar{\boldsymbol{y}}_h, \bar{\boldsymbol{z}}_h$ be the solutions of (\ref{Eq: necessary condition dis})-(\ref{Eq: opt sys dis}), then the error indicator $\varepsilon_{pr}$ defined in (\ref{Def: e gl}) satisfies the following estimate
\begin{align*}
\varepsilon_{pr}\lesssim &\|\bar{\boldsymbol{u}}-\bar{\boldsymbol{u}}_h\|+\|\nabla(\bar{\boldsymbol{y}}-\bar{\boldsymbol{y}}_h)\|+\|\nabla(\bar{\boldsymbol{z}}-
\bar{\boldsymbol{z}}_h)\|\\
&+OSC_2(\bar{\boldsymbol{u}}_h,\hat{p},\bar{\boldsymbol{y}}_h)+OSC_2(\bar{\boldsymbol{y}}_h-\boldsymbol{y_d},\check{r},\bar{\boldsymbol{z}}_h)+OSC_3,
\end{align*}
where the last three terms on the right-hand side of the inequality are of higher order, while $OSC_2$ and $OSC_3$ are defined in~\eqref{Def: higher order term} and~\eqref{eqn:OSC_3}, respectively.
The pressure $\hat{p}$ and $\check{r}$ are the solutions of (\ref{Eq: opt sys hat}) and (\ref{Eq: adjoint check}), respectively.
\end{theorem}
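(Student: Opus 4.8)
The plan is to bound the three constituents of $\varepsilon_{pr}$ in (\ref{Def: e gl}) separately and then add them, using Theorem~\ref{thm1} as the workhorse for the two velocity-type estimators, with the continuous auxiliary problems (\ref{Eq: opt sys hat}) and (\ref{Eq: adjoint check}) playing the role of the ``exact'' comparison solutions for the discrete state and adjoint equations. First I would dispatch the optimality term $\varepsilon_0(\bar{\boldsymbol{z}}_h,\bar{\boldsymbol{u}}_h)=\|\frac{1}{\alpha}\bar{\boldsymbol{z}}_h+\bar{\boldsymbol{u}}_h\|$: inserting the continuous first-order condition (\ref{Eq: 1 order necessary optimal condition}), i.e. $\frac{1}{\alpha}\bar{\boldsymbol{z}}+\bar{\boldsymbol{u}}=\boldsymbol{0}$, and applying the triangle inequality followed by the Poincaré inequality gives $\varepsilon_0\lesssim \frac{1}{\alpha}\|\nabla(\bar{\boldsymbol{z}}-\bar{\boldsymbol{z}}_h)\|+\|\bar{\boldsymbol{u}}-\bar{\boldsymbol{u}}_h\|$, which is already of the desired form.

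Next, for the state estimator $\varepsilon(\bar{\boldsymbol{y}}_h,\bar{\boldsymbol{u}}_h)$ I would view (\ref{Eq: Stokes finite 1}) as the pressure-robust discretization (\ref{Eq: Stokes dis 1}) with data $\boldsymbol{f}=\bar{\boldsymbol{u}}_h$, whose exact solution is $(\hat{\boldsymbol{y}},\hat{p})$ from (\ref{Eq: opt sys hat}). Theorem~\ref{thm1} then yields $\varepsilon(\bar{\boldsymbol{y}}_h,\bar{\boldsymbol{u}}_h)\lesssim \|\nabla(\hat{\boldsymbol{y}}-\bar{\boldsymbol{y}}_h)\|+OSC_2(\bar{\boldsymbol{u}}_h,\hat{p},\bar{\boldsymbol{y}}_h)$. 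A further triangle inequality and Lemma~\ref{Lem: stability estimate}, applied to $\hat{\boldsymbol{y}}-\bar{\boldsymbol{y}}$ (which solves a Stokes problem with data $\bar{\boldsymbol{u}}_h-\bar{\boldsymbol{u}}$), bound $\|\nabla(\hat{\boldsymbol{y}}-\bar{\boldsymbol{y}})\|\lesssim \|\bar{\boldsymbol{u}}-\bar{\boldsymbol{u}}_h\|$, leaving only $\|\nabla(\bar{\boldsymbol{y}}-\bar{\boldsymbol{y}}_h)\|$ and the stated oscillation on the right-hand side.

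The adjoint estimator $\varepsilon(\bar{\boldsymbol{z}}_h,\mathcal{R}_h\bar{\boldsymbol{y}}_h-\boldsymbol{y_d})$ is where the work concentrates, and I expect it to be the main obstacle. Regarding (\ref{Eq: opt sys dis}) as the pressure-robust discretization of the Stokes-type adjoint with data $\boldsymbol{f}=\mathcal{R}_h\bar{\boldsymbol{y}}_h-\boldsymbol{y_d}$ and exact solution $(\check{\boldsymbol{z}},\check{r})$ from (\ref{Eq: adjoint check}), Theorem~\ref{thm1} gives $\varepsilon(\bar{\boldsymbol{z}}_h,\mathcal{R}_h\bar{\boldsymbol{y}}_h-\boldsymbol{y_d})\lesssim \|\nabla(\check{\boldsymbol{z}}-\bar{\boldsymbol{z}}_h)\|+OSC_2(\mathcal{R}_h\bar{\boldsymbol{y}}_h-\boldsymbol{y_d},\check{r},\bar{\boldsymbol{z}}_h)$. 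Splitting $\|\nabla(\check{\boldsymbol{z}}-\bar{\boldsymbol{z}}_h)\|\le\|\nabla(\check{\boldsymbol{z}}-\bar{\boldsymbol{z}})\|+\|\nabla(\bar{\boldsymbol{z}}-\bar{\boldsymbol{z}}_h)\|$, the only delicate quantity is $\|\nabla(\check{\boldsymbol{z}}-\bar{\boldsymbol{z}})\|$. The difficulty is that $\check{\boldsymbol{z}}-\bar{\boldsymbol{z}}$ solves the adjoint Stokes problem with data $\mathcal{R}_h\bar{\boldsymbol{y}}_h-\bar{\boldsymbol{y}}$, and a naive triangle inequality through $\bar{\boldsymbol{y}}_h$ would leave $\|\mathcal{R}_h\bar{\boldsymbol{y}}_h-\bar{\boldsymbol{y}}_h\|\sim(\sum_T h_T^4|\bar{\boldsymbol{y}}_h|_{2,T}^2)^{1/2}$, one order too coarse to be absorbed into $OSC_3=(\sum_T h_T^6|\bar{\boldsymbol{y}}_h|_{2,T}^2)^{1/2}$. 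To recover the missing order I would not invoke stability abstractly; instead I would test the energy identity for $\check{\boldsymbol{z}}-\bar{\boldsymbol{z}}$ against itself, so that $a(\check{\boldsymbol{z}}-\bar{\boldsymbol{z}},\check{\boldsymbol{z}}-\bar{\boldsymbol{z}})=(\mathcal{R}_h\bar{\boldsymbol{y}}_h-\bar{\boldsymbol{y}},\check{\boldsymbol{z}}-\bar{\boldsymbol{z}})$, write the data as $(\mathcal{R}_h\bar{\boldsymbol{y}}_h-\bar{\boldsymbol{y}}_h)+(\bar{\boldsymbol{y}}_h-\bar{\boldsymbol{y}})$, and exploit the orthogonality (\ref{Eq: reconstruction operator def2}) of $\mathcal{R}_h\bar{\boldsymbol{y}}_h-\bar{\boldsymbol{y}}_h$ against piecewise constants to subtract $\boldsymbol{I}_0(\check{\boldsymbol{z}}-\bar{\boldsymbol{z}})$. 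Combining (\ref{Eq: 1-R estimate}) with $m=2$ and the interpolation estimate (\ref{Eq: interpolation estimate}) then supplies the extra factor $h_T$, and a Cauchy--Schwarz over the elements together with the Poincaré inequality gives $\|\nabla(\check{\boldsymbol{z}}-\bar{\boldsymbol{z}})\|\lesssim OSC_3+\|\nabla(\bar{\boldsymbol{y}}-\bar{\boldsymbol{y}}_h)\|$ — precisely the mechanism already used in the reliability proof of Theorem~\ref{thm2}.

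Finally, the oscillation $OSC_2(\mathcal{R}_h\bar{\boldsymbol{y}}_h-\boldsymbol{y_d},\check{r},\bar{\boldsymbol{z}}_h)$ produced by Theorem~\ref{thm1} differs from the stated $OSC_2(\bar{\boldsymbol{y}}_h-\boldsymbol{y_d},\check{r},\bar{\boldsymbol{z}}_h)$ only through $\mathcal{R}_h\bar{\boldsymbol{y}}_h-\bar{\boldsymbol{y}}_h$; I would argue that its contribution to each $osc(\cdot,\mathcal{M},k)$ term is higher order and can be folded into $OSC_3$, using an inverse inequality on the $curl$ and tangential-jump pieces to control the derivative of the reconstruction defect. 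Summing the three bounds, collecting the $(1+\alpha^{-1})$ and $\alpha^{-1}$ factors, and noting that all velocity and adjoint errors now appear in their $H^1$-seminorms while the controls appear in their $L^2$-norms, completes the estimate in the form asserted.
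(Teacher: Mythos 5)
Your proposal is correct and follows essentially the same route as the paper: the optimality term is handled by inserting the continuous condition (\ref{Eq: 1 order necessary optimal condition}) plus the triangle and Poincar\'e inequalities, and the state and adjoint estimators are bounded by the efficiency half of Theorem~\ref{thm1} with $(\hat{\boldsymbol{y}},\hat{p})$ and $(\check{\boldsymbol{z}},\check{r})$ as the exact comparison solutions, together with Lemma~\ref{Lem: stability estimate} and the reconstruction-orthogonality bound (\ref{Eq: temp 1}) --- your single energy estimate for $\check{\boldsymbol{z}}-\bar{\boldsymbol{z}}$ is exactly the paper's split through the intermediate $\hat{\boldsymbol{z}}$ carried out in one step. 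The only place you go beyond the paper is in reconciling $OSC_2(\mathcal{R}_h\bar{\boldsymbol{y}}_h-\boldsymbol{y_d},\check{r},\bar{\boldsymbol{z}}_h)$, which Theorem~\ref{thm1} actually delivers, with the stated $OSC_2(\bar{\boldsymbol{y}}_h-\boldsymbol{y_d},\check{r},\bar{\boldsymbol{z}}_h)$ (a discrepancy the paper silently identifies away); just note that for the $curl$ and tangential-jump pieces the clean argument is not an inverse inequality --- which would only produce weights $h_T^4$, too coarse for $OSC_3$ --- but the observation that $curl(\mathcal{R}_h\bar{\boldsymbol{y}}_h-\bar{\boldsymbol{y}}_h)$ and the corresponding edge traces are piecewise polynomials of degree at most two (the cell bubbles vanish on edges) and are therefore annihilated by $1-\Pi_2$ in the oscillation (\ref{Def: osc}).
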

\begin{proof}

Recall that $\varepsilon_{pr}$ consists of three terms: $\varepsilon_0(\bar{\boldsymbol{z}}_h,\bar{\boldsymbol{u}}_h)$, 
$\varepsilon(\bar{\boldsymbol{y}}_h, \bar{\boldsymbol{u}}_h)$, and $\varepsilon(\bar{\boldsymbol{z}}_h,\bar{\boldsymbol{y}}_h)$. Next, we prove the efficiency of each term separately.

For $\varepsilon_0(\bar{\boldsymbol{z}}_h,\bar{\boldsymbol{u}}_h)$, we have
\begin{align*}
\varepsilon_0(\bar{\boldsymbol{z}}_h,\bar{\boldsymbol{u}}_h)=&\|\frac{1}{\alpha}\bar{\boldsymbol{z}}_h+\bar{\boldsymbol{u}}_h\|\\
\leq&  \|\frac{1}{\alpha}(\bar{\boldsymbol{z}}_h-\bar{\boldsymbol{z}})\|+\|\frac{1}{\alpha}\bar{\boldsymbol{z}}+\bar{\boldsymbol{u}}_h\|\\
=&  \|\frac{1}{\alpha}(\bar{\boldsymbol{z}}_h-\bar{\boldsymbol{z}})\|+\|\bar{\boldsymbol{u}}-\bar{\boldsymbol{u}}_h\|\\ 
\lesssim& \alpha^{-1}\|\nabla(\bar{\boldsymbol{z}}_h-\bar{\boldsymbol{z}})\|+\|\bar{\boldsymbol{u}}-\bar{\boldsymbol{u}}_h\|.
\end{align*}

The proofs for the remaining two are mainly based on Lemma~\ref{Lem: stability estimate}.
From (\ref{Eq: Stokes efficiency}) and (\ref{Eq: temp 1}), it can be obtained 
\begin{align*}
\varepsilon(\bar{\boldsymbol{y}}_h,\bar{\boldsymbol{u}}_h)\lesssim&  \|\nabla(\hat{\boldsymbol{y}}-\bar{\boldsymbol{y}}_h)\|+OSC_2(\bar{\boldsymbol{u}}_h,\hat{p},\bar{\boldsymbol{y}}_h)\\
\leq&  \| \nabla(\hat{\boldsymbol{y}}-\bar{\boldsymbol{y}})\|+\|\nabla(\bar{\boldsymbol{y}}-\bar{\boldsymbol{y}}_h)\|+OSC_2(\bar{\boldsymbol{u}}_h,\hat{p},\bar{\boldsymbol{y}}_h)\\
\lesssim& \|\bar{\boldsymbol{u}}_h-\bar{\boldsymbol{u}}\|+\|\nabla(\bar{\boldsymbol{y}}-\bar{\boldsymbol{y}}_h)\|+OSC_2(\bar{\boldsymbol{u}}_h,\hat{p},\bar{\boldsymbol{y}}_h).
\end{align*}
and
\begin{align*}
\varepsilon(\bar{\boldsymbol{z}}_h,\bar{\boldsymbol{y}}_h)  \lesssim & \| \nabla(\check{\boldsymbol{z}}-\bar{\boldsymbol{z}}_h)\|+OSC_2(\bar{\boldsymbol{y}}_h-\boldsymbol{y_d},\check{r},\bar{\boldsymbol{z}}_h)\\
\leq&  \| \nabla(\check{\boldsymbol{z}}-\hat{\boldsymbol{z}})\|+\| \nabla(\hat{\boldsymbol{z}}-\bar{\boldsymbol{z}})\|+\| \nabla(\bar{\boldsymbol{z}}-\bar{\boldsymbol{z}}_h)\|+OSC_2(\bar{\boldsymbol{y}}_h-\boldsymbol{y_d},\check{r},\bar{\boldsymbol{z}}_h)\\
\lesssim& OSC_3+\|\nabla(\bar{\boldsymbol{y}}_h-\bar{\boldsymbol{y}})\|+\|\nabla(\bar{\boldsymbol{z}}-\bar{\boldsymbol{z}}_h)\|+OSC_2(\bar{\boldsymbol{y}}_h-\boldsymbol{y_d},\check{r},\bar{\boldsymbol{z}}_h).
\end{align*}

\end{proof}

\begin{remark}
The proofs of Theorems \ref{thm2} and \ref{thm4} indicate that both the reliability and efficiency constants depend on $\alpha$.
\end{remark}

\begin{remark}[Higher-order oscillation terms]
The \textit{a posteriori error} estimates in Theorems \ref{thm2} and \ref{thm4} 
involve several oscillation
terms, denoted by $OSC_1$, $OSC_2$, and $OSC_3$.
These terms arise from data approximation, reconstruction, and interpolation procedures,
and are standard in residual-based \textit{a posteriori} error analysis.
They are of higher order with respect to the dominant
residual estimator and vanish under mesh refinement. In particular, for sufficiently smooth
solutions, they do not affect the asymptotic convergence rate of the \textit{a posteriori} error estimates.
\end{remark}

\section{Numerical example}
In this section, we provide three numerical examples in two dimensions to validate the properties of the estimator discussed in Theorems~\ref{thm3},\ref{thm2}, and \ref{thm4}, using uniform and adaptive refinement meshes. In order to show the effectiveness of the global error estimator defined in (\ref{Def: e gl}), we set $E_h=\sqrt{\|\bar{\boldsymbol{u}}-\bar{\boldsymbol{u}}_h\|^2+\|\nabla(\bar{\boldsymbol{y}}-\bar{\boldsymbol{y}}_h)\|^2+\|\nabla(\bar{\boldsymbol{z}}-\bar{\boldsymbol{z}}_h)\|^2}$ and introduce the effectivity indices, defined as
$\kappa_{pr}=\varepsilon_{pr}/E_h$ and $\kappa_c=\hat{\varepsilon}/E_h$, which is the ratio between the global error estimator and the approximation error. Here we substitute the solutions of (\ref{Eq: necessary condition dis})-(\ref{Eq: opt sys dis}) into (\ref{Def: e gl}) and (\ref{Def: error estimator classical}), respectively, to obtain $\varepsilon_{pr}$ and $\hat{\varepsilon}$.
According to Theorems~\ref{thm2} and \ref{thm4}, the effectivity index $\kappa_{pr}$ is bounded from above and below and independent of $h$ and pressure. However, from Theorem~\ref{thm3}, The efficiency analysis of the error estimator $\hat{\varepsilon}$ involves the pressure error (and thus the effectivity index 
$\kappa_c$ depends on the pressure); therefore, it is not suitable as an estimator for the velocity error.

All these numerical examples have been implemented on a 2.4~GHz dual-core processor with 8~G RAM by MATLAB. The matrices were assembled with exact precision. The computation of the right-hand side vectors and the approximation errors has been performed using a quadrature formula that is exact for polynomials up to the eighth degree. For the resolution of the ensuing global linear systems, distinct methodologies are employed for Examples 1, 2, and 3. In Example 1, homogeneous boundary conditions are imposed. When $\alpha>0$, the derivation of an exact solution becomes intractable. To overcome this challenge, a numerical solution obtained on a highly refined grid, characterized by a substantial number of unknowns, serves as a surrogate for the exact solution. Consequently, for Example 1, the backslash operator of MATLAB is utilized for systems with a limited number of unknowns, specifically when the count is below the threshold of 5000. In contrast, for systems with a more substantial number of unknowns, exceeding the limit of 5000, the Uzawa algorithm, with a step size of $0.1$, is used to obtain the solution. In Example 2 and 3, non-homogeneous boundary conditions are applied. The exact solution can be articulated by incorporating an additional source term on the right-hand side of the equation. Given that the number of unknowns in Example 2 and 3 is not excessively large, the backslash operator is applied directly.

\noindent{\bf Example 1.} The first example, taken from \cite{merdon2023pressure}, aims to investigate the convergence behavior of the discrete problem and the new \textit{a posteriori} error estimator on a uniform refinement mesh. This example studies the prescribed polynomial velocity solution 
\begin{align*}
\bar{\boldsymbol{y}}(x,y)=curl\big(x^4(x-1)^4y^4(y-1)^4\big)
\end{align*}
and null pressure solution of the Stokes problem on the unit square, where $curl(v)=(-\partial v/\partial y,\partial v/\partial x)$. Then, the state and control functions $(\bar{\boldsymbol{y}},\bar{\boldsymbol{u}})$, with $\bar{\boldsymbol{u}}=\nu\Delta\bar{\boldsymbol{y}}$, minimize the optimal control problem (\ref{Eq: cost function}) and (\ref{Eq: Stokes}) for $\alpha=0$ and $\boldsymbol{y}_d=\bar{\boldsymbol{y}}$. For $\alpha>0$, the exact minimizer is unknown, and a reference solution is computed on a very fine mesh shown in Fig.~\ref{fig:Reference solutions for ex1}.

\begin{figure}[htbp]
  \centering
  \includegraphics[width=0.325\textwidth]{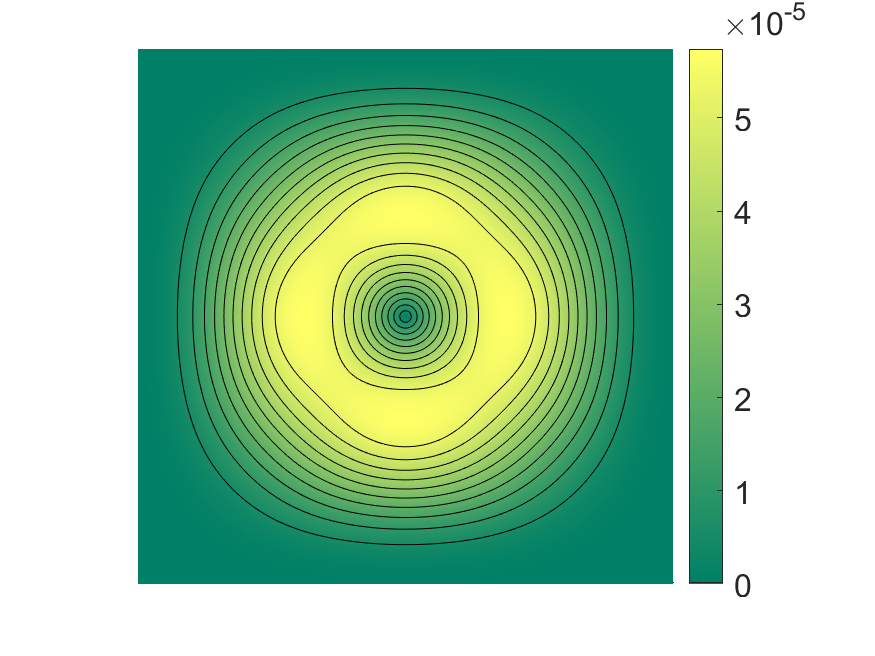}
  \includegraphics[width=0.325\textwidth]{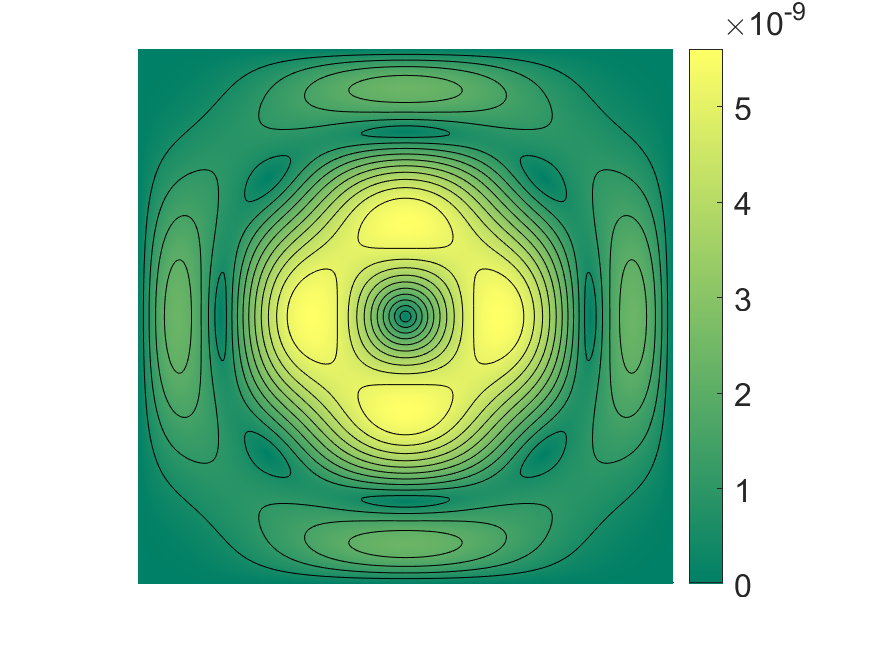}
  \includegraphics[width=0.325\textwidth]{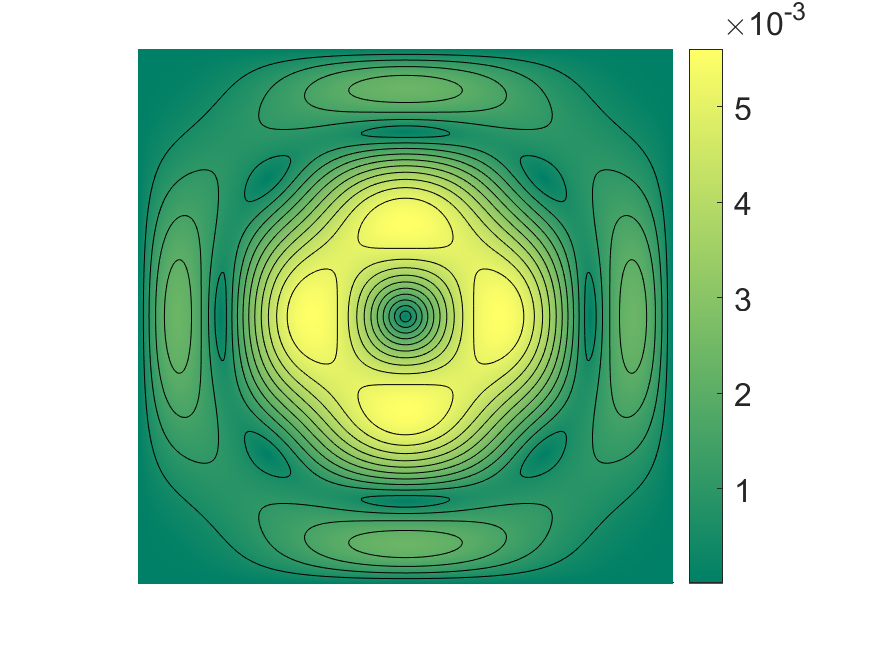}\\
  \includegraphics[width=0.325\textwidth]{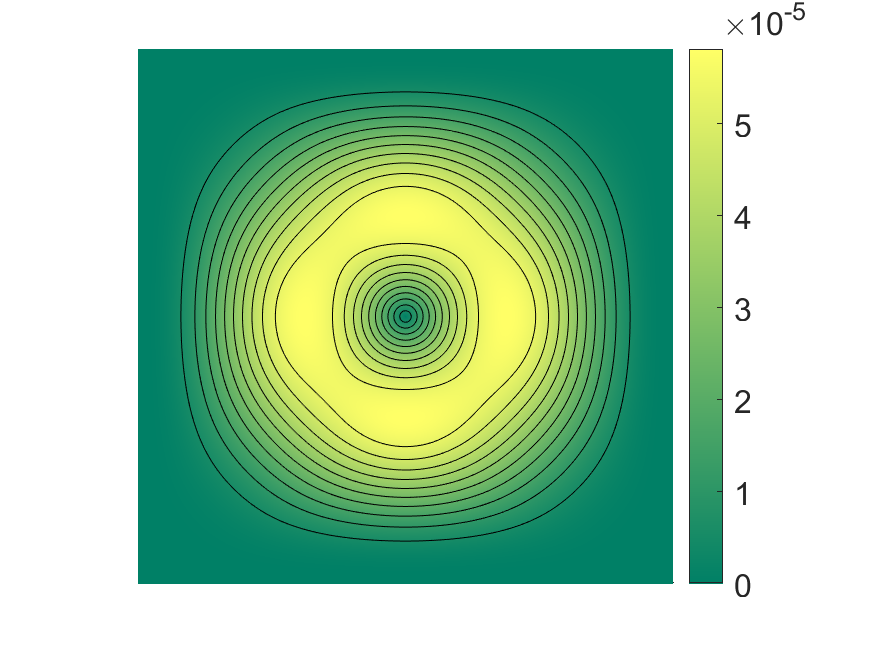}
  \includegraphics[width=0.325\textwidth]{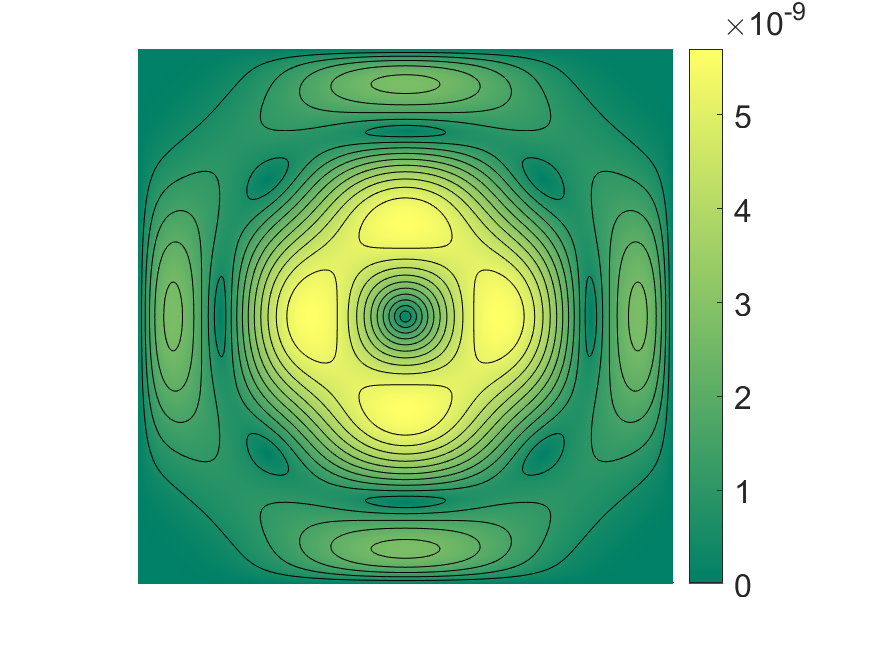}
  \includegraphics[width=0.325\textwidth]{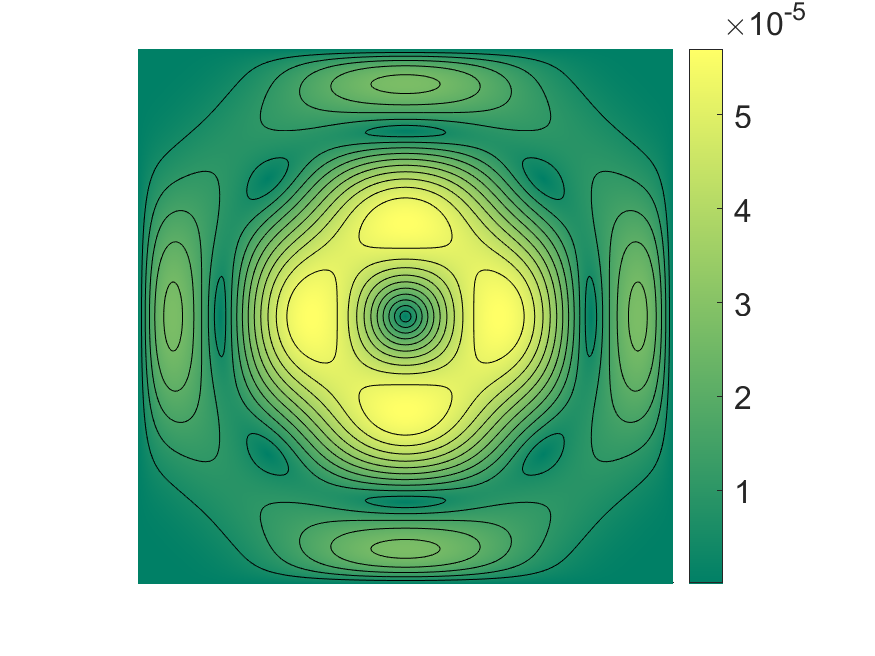}
\caption{Reference solutions for $\boldsymbol{y}$ (left), $\boldsymbol{z}$ (middle), and $\boldsymbol{u}$ (right) of (\ref{Eq: opt sys}) with $d.o.f=2363396$ for $(\nu,\alpha)=(1,10^{-6})$ in top row and $(\nu,\alpha)=(10^{-2},10^{-4})$ in bottom row in Example 1.}
\label{fig:Reference solutions for ex1}
\end{figure}

Tab.~\ref{tab:error ex1} presents a comparison between the approximation error $E_h$ and the \textit{a posteriori} error estimator for the parameter pairs $(\nu,\alpha)=(1,10^{-6})$ and $(10^{-2},10^{-4})$. The findings reveal that the approximation error and
the error estimator maintain the optimal convergence rate for $\nu = 1$ and displays slightly suboptimal behavior pre-asymptotically for $\nu=10^{-2}$ , relative to the degree of freedom ($d.o.f$), as indicated by $N$ in Fig.~\ref{fig:convergence for ex1} and \ref{fig:convergence order ex2}. The effectivity index approaches a constant value, which is independent of the mesh size $h$. These results align with Theorems~\ref{thm2} and \ref{thm4}. For a more intuitive representation of the convergence behavior across varying values of $\nu$ and $\alpha$, see Fig.~\ref{fig:convergence for ex1}. In this context, the various components of the \textit{a posteriori} error estimator share the same convergence rate, with the exception that the term $\varepsilon_0(\bar{\boldsymbol{z}}_h,\bar{\boldsymbol{u}}_h)$ occasionally demonstrates superconvergence behavior. However, this sporadic superconvergence does not influence the overall rate of convergence.

\begin{table}[htbp]
\caption{The error of the discrete problem 
and the \textit{a posteriori} error estimator in Example 1.} \label{tab:error ex1}
\begin{tabular*}{\hsize}{@{}@{\extracolsep{\fill}}cc|cc|cc|c@{}}
    \hline
   $(\nu,\alpha)$&$d.o.f$          &  $E_h$ & order    & $\varepsilon_{pr}$ & order
   &      $\kappa_{pr}$ \\
   \hline
    \hline
   $(1,10^{-6})$&$180$   &  $1.025$E-3  &  ---     &   $1.414$E-2  &---      &  $13.79$\\

                &$644$   &  $7.726$E-4  & $0.221$  &   $6.6216$E-3  & $0.595$ &  $8.563$\\

               &$2436$  &  $2.187$E-4  & $0.948$  &   $1.889$E-3  & $0.941$ &  $8.638$\\

               &$9476$  &  $5.664$E-5  & $0.994$  &   $4.742$E-4  & $1.017$ &  $8.372$\\

               &$37380$ &  $1.354$E-5  & $1.042$  &   $1.168$E-4  & $1.020$ &  $8.623$\\
     
               &$148484$ &  $3.529$E-6  & $0.975$  &   $2.968$E-5  & $0.993$ &  $8.409$\\
    \hline
    \hline
$(10^{-2},10^{-4})$&$180$   &  $1.368$E-4  &  ---     &   $2.644$E-2  &---      &  $193.2$\\

                   &$644$   &  $1.890$E-4  & $-0.253$  &   $3.236$E-2  & $-0.158$ &  $171.2$\\

                   &$2436$  &  $5.346$E-5  & $0.949$  &   $5.073$E-3  & $1.392$ &  $94.89$\\

                   &$9476$  &  $8.137$E-6  & $1.385$  &   $6.655$E-4  & $1.495$ &  $81.78$\\

                   &$37380$ &  $1.785$E-6 & $1.105$  &   $1.636$E-4 & $1.022$ &  $91.69$\\
     
                   &$148484$ &  $4.683$E-7 & $0.970$  &   $4.452$E-5 & $0.943$ &  $95.06$\\
    \hline
\end{tabular*}
\end{table}

\begin{figure}[htbp]
  \centering
  \includegraphics[width=0.4\textwidth]{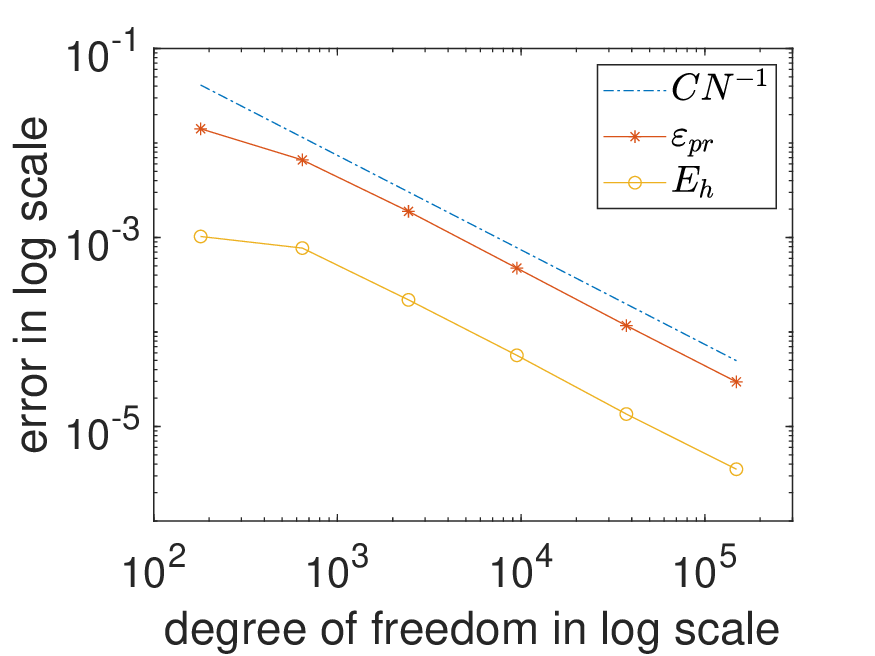}
  \includegraphics[width=0.4\textwidth]{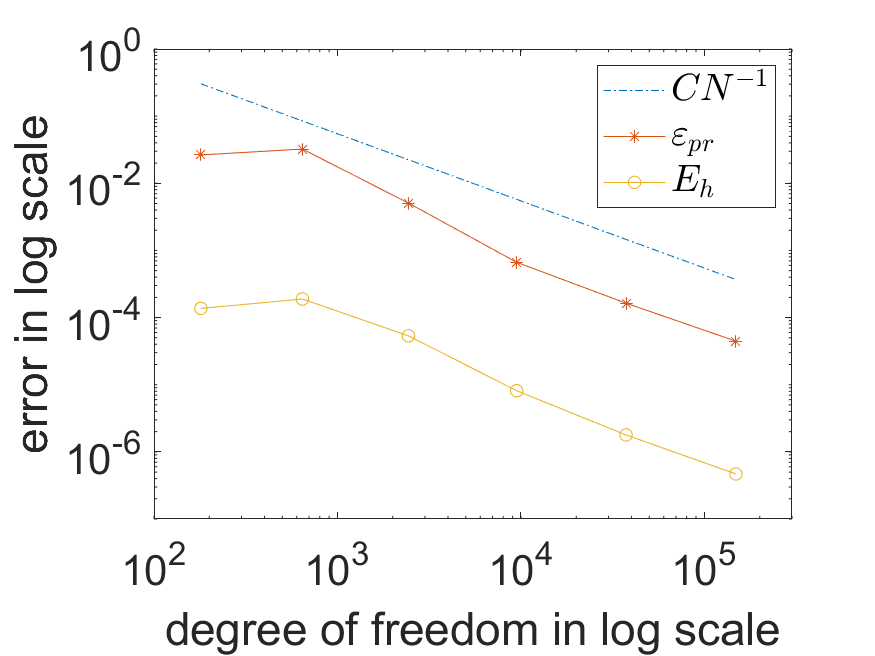}\\
  \includegraphics[width=0.4\textwidth]{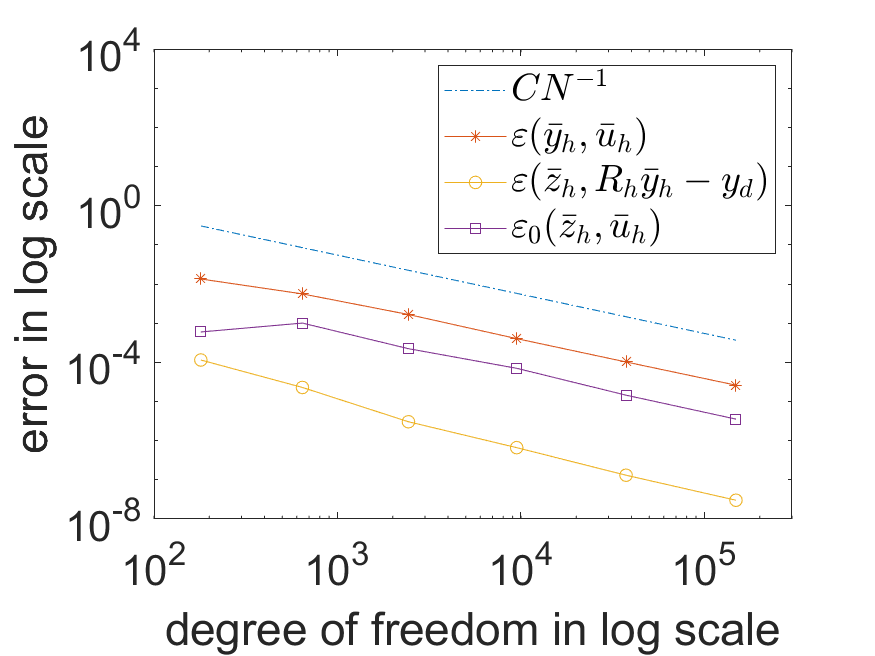}
  \includegraphics[width=0.4\textwidth]{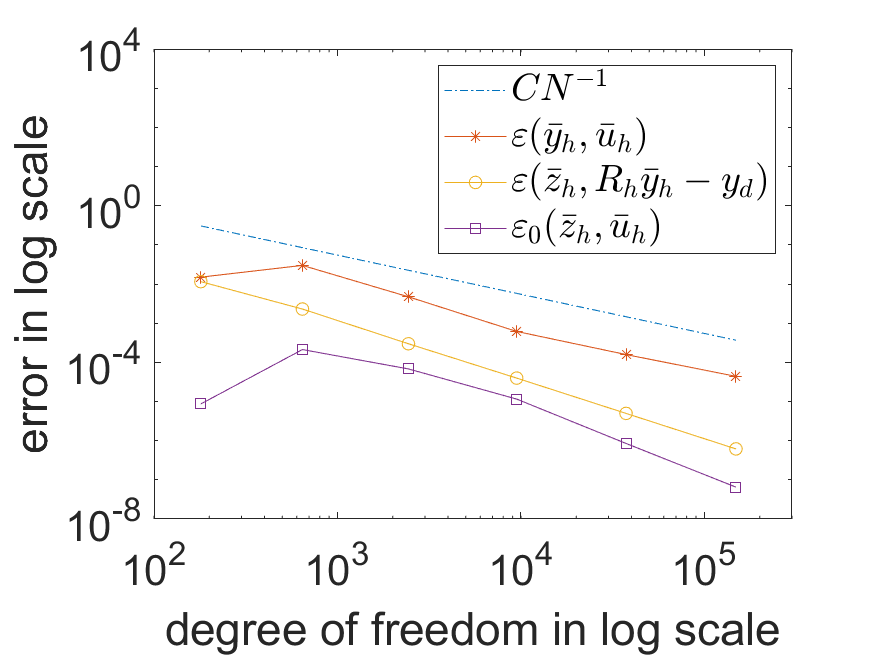}
\caption{Convergence rates (top row) and its individual contributions (bottom row) with $(\nu,\alpha)=(1,10^{-6})$ on the left and $(\nu,\alpha)=(10^{-2},10^{-4})$ on the right in Example 1.}
\label{fig:convergence for ex1}
\end{figure}

\noindent{\bf Example 2.} This example, taken from
\cite{Verfurth1996}, is intended to show the convergence properties of the 
discrete problem (\ref{Eq: cost function finite})-(\ref{Eq: Stokes finite}) and the performance of the \textit{a posteriori} error estimator on an adaptively refined mesh.
Let $\Omega$ be the L-shape domain $(-1,1)^2 \backslash[0,1)\times
(-1,0]$ and $\nu=1$. Then, use $(\rho,
\varphi)$ to denote the polar coordinates. The state functions $\bar{\boldsymbol{y}}=(\bar{y}_1,\bar{y}_2)$ and $\bar{p}$ with singularity at the origin are as follows
\begin{align*}
\bar{y}_1(\rho,\varphi)&=\rho^\lambda((1+\lambda)sin(\varphi)\Psi(\varphi)
+cos(\varphi)\Psi^{\prime}(\varphi)),\\
\bar{y}_2(\rho,\varphi)&=\rho^\lambda(sin(\varphi)\Psi^{\prime}(\varphi)
-(1+\lambda)cos(\varphi)\Psi(\varphi)),\\
\bar{p}(\rho,\varphi)&=-\rho^{\lambda-1}[(1+\lambda)^2\Psi^{\prime}(\varphi)
+\Psi^{\prime\prime\prime}]/(1-\lambda),
\end{align*}
where
\begin{align*}
\Psi(\varphi)&= \sin ((1+\lambda) \varphi) \cos (\lambda \omega)
/(1+\lambda)-\cos ((1+\lambda) \varphi) \\
&\quad-\sin ((1-\lambda) \varphi)
\cos (\lambda \omega) /(1-\lambda)+\cos ((1-\lambda) \varphi),\\
\omega&= \frac{3 \pi}{2}.
\end{align*}
The exponent $\lambda$ is the smallest positive solution of
\begin{align*}
\sin (\lambda \omega)+\lambda \sin (\omega)=0,
\end{align*}
thereby, $\lambda\approx 0.54448373678246$. The adjoint state functions are chosen to be $\bar{\boldsymbol{z}}=0.01\bar{\boldsymbol{y}}$ and $\bar{r}=-0.01\bar{p}$. The control function is given by $\bar{\boldsymbol{u}}=-\bar{\boldsymbol{z}}/\alpha$ with $\alpha=0.01$. The state functions hold a non-homogeneous boundary condition. An extra known source term, denoted by $\boldsymbol{u}_0$ and determined from the exact solutions, will be added to the state equations. In detail, the first state equations in (\ref{Eq: Stokes finite 1}) can be rewriteen as
\begin{align*}
a(\bar{\boldsymbol{y}}_h,\boldsymbol{v}_h)-b(\boldsymbol{v}_h,\bar{p}_h) &= (\bar{\boldsymbol{u}}_h-\boldsymbol{u}_0,\mathcal{R}_h\boldsymbol{v}_h).
\end{align*}
Despite this example featuring nonhomogeneous boundary conditions and additional source terms, which fall outside the scope of the theoretical analysis presented in this paper, the performance of the \textit{a posteriori} error estimator remains effective.

Recalling (\ref{Def: error estimator}), (\ref{Def: e gl}) and (\ref{Def: e 0}), we propose the following local error estimator:
\begin{align*}
 \begin{aligned}
\varepsilon_T^2:=& \frac{1}{\nu^2}\| h_T^2
curl(\bar{\boldsymbol{u}}_h+\nu \Delta \bar{\boldsymbol{y}}_h)\|_{T}^2+\frac{1}{2}\sum_{E\in (\mathcal{E}_h\setminus \partial \Omega)\cap T}\|h_E^{1/2}[ \nabla \bar{\boldsymbol{y}}_h\cdot
  \boldsymbol{n}_E]\|_{E}^2\\
    &+\frac{1}{2\nu^2}\sum_{E\in (\mathcal{E}_h\setminus \partial \Omega)\cap T}\|h_E^{3/2}[(\bar{\boldsymbol{u}}_h+\nu  
    \Delta\bar{\boldsymbol{y}}_h)\cdot \boldsymbol{\tau}_E]\|_{E}^2+\|\nabla\cdot\bar{\boldsymbol{y}}_h\|_T^2\\
    &+\frac{1}{\nu^2}\| h_T^2
curl(\mathcal{R}_h\bar{\boldsymbol{y}}_h-\boldsymbol{y_d}+\nu \Delta\bar{\boldsymbol{z}}_h)\|_{T}^2+\frac{1}{2}\sum_{E\in (\mathcal{E}_h\setminus \partial \Omega)\cap T}\|h_E^{1/2}[ \nabla \bar{\boldsymbol{z}}_h\cdot
  \boldsymbol{n}_E]\|_{E}^2\\
    &+\frac{1}{2\nu^2}\sum_{E\in (\mathcal{E}_h\setminus \partial \Omega)\cap T}\|h_E^{3/2}[(\mathcal{R}_h\bar{\boldsymbol{y}}_h-\boldsymbol{y_d}+\nu  \Delta\bar{\boldsymbol{z}}_h))\cdot \boldsymbol{\tau}_E]\|_{E}^2+\|\nabla\cdot\bar{\boldsymbol{z}}_h\|_T^2\\
    &+\|\frac{1}{\alpha}\bar{\boldsymbol{z}}_h+\bar{\boldsymbol{u}}_h\|_T.
\end{aligned}
\end{align*}
It is evident that $\varepsilon_{pr}^2=\sum_{T\in\mathcal{T}_h}\varepsilon_T^2$. Utilizing the local error estimator $\varepsilon_T$, global error estimator $\varepsilon_{pr}$, and a D\"{o}fler parameter, we implement a standard adaptive refinement strategy, which encompasses iterative cycles structured as: ${\verb"SOLVE"}\rightarrow \verb"ESTIMATE"\rightarrow
\verb"MARK"\rightarrow \verb"REFINE"$. A more refined triangulation is obtained by subdividing the marked elements into four parts through regular refinement (bisecting all edges of the selected triangles), a process that can be executed using the MATLAB function REFINEMESH.

Subsequently, setting the D\"{o}fler parameter to $0.2$, we compare the convergence rates between the approximation error $E_h$ and the \textit{a posteriori} error estimator $\varepsilon_{pr}$.
As depicted in Tab.~\ref{tab:error ex2} and
Fig.~\ref{fig:convergence order ex2}, it is observed that
$E_h$, $\varepsilon_{pr}$, and their respective components exhibit convergence rates that are nearly optimal, approaching the value of
$1$ with respect to $d.o.f$. The two figures in Fig.~\ref{fig:Mesh and error ex2} illustrate the initial mesh with $d.o.f=252$ and the seventh refinement mesh with $d.o.f=5078$. The refinement process introduces additional elements near the origin as $d.o.f$ increases, thus diminishing the global error. The effectivity index
$\kappa_{pr}$, presented in the sixth column of Tab.~\ref{tab:error ex2}, 
converges to a constant value that is independent of $h$, signifying the reliability of the estimator.

\begin{figure}[htbp]
  \centering
  \includegraphics[width=0.8\textwidth]{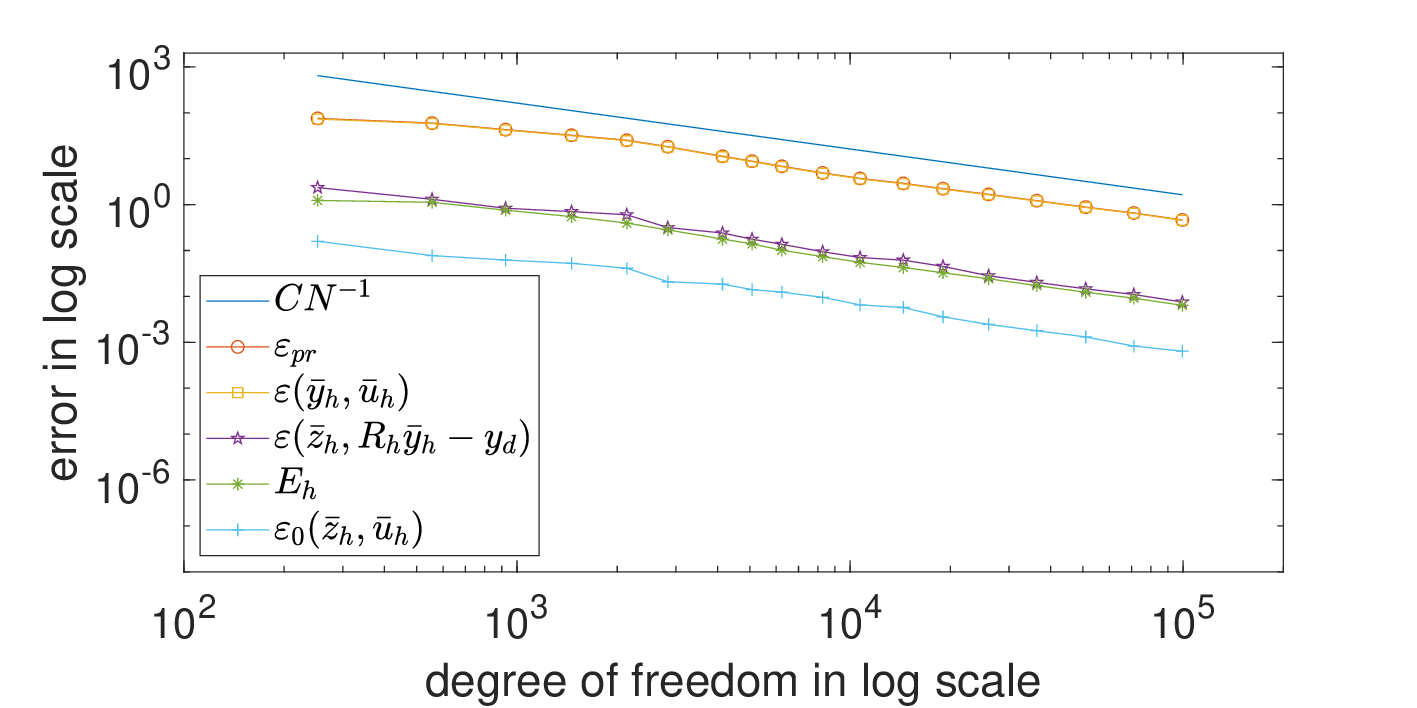}
\caption{The convergent rates for Example 2.}
 \label{fig:convergence order ex2}
\end{figure}

\begin{table}[htbp]
\centering \caption{The error of the discrete problem
and the \textit{a posteriori} error estimator in Example 2.} \label{tab:error ex2}
\begin{tabular*}{\hsize}{@{}@{\extracolsep{\fill}}cccccc@{}}
    \hline
   $d.o.f$          &  $E_h$ & order   &  $\varepsilon_{pr}$ & order & $\kappa_{pr}$\\
    \hline
      $252$   &  $1.232$E0      &   ---   &  $7.552$E1  &   ---          &64.27 \\

      $556$    &  $1.122$E0    &    $0.118$    &  $5.971$E1  &   $0.296$ &53.19 \\

      $924$    &  $7.548$E-1    &    $0.781$    &  $4.297$E1  &   $0.647$ & 56.93\\

     $1458$    &  $5.471$E-1    &    $0.705$    &  $3.243$E1  &   $0.617$ & 59.27\\

     $2136$    &  $3.960$E-1    &    $0.846$    &  $2.524$E1  &   $0.656$& 63.73\\

     $2836$    &  $2.803$E-1   &    $1.218$    &  $1.827$E1  &   $1.139$  &65.17\\

     $4140$    &  $1.744$E-1   &    $1.208$    &  $1.128$E1  &   $1.274$  &63.58\\

     $5078$    &  $1.392$E-1   &    $1.187$    &  $8.832$E0  &   $1.200$ &63.41\\

    $6240$    &  $1.012$E-1   &    $1.546$    &  $6.838$E0  &   $1.241$  &67.52\\

    $8272$    &  $7.350$E-2   &    $1.137$    &  $4.906$E0  &   $1.177$  &66.75\\

    $10716$    &  $5.517$E-2   &    $1.107$    &  $3.714$E0  &   $1.075$ &67.31\\

    $14448$    &  $4.257$E-2   &    $0.867$    &  $2.905$E0  &   $0.821$  &68.24\\

    $19006$   &  $3.272$E-2     &   $0.959$   &  $2.223$E0  &   $0.976$  &67.92 \\

    $26004$   &  $2.417$E-2     &   $0.962$   &  $1.671$E0  &   $0.905$  & 69.15 \\

    $36344$   &  $1.716$E-2     &   $1.027$   &  $1.216$E0  &   $0.953$  & 70.89 \\

    $50988$   &  $1.229$E-2     &   $0.983$   &  $8.801$E-1  &   $0.956$  & 71.55 \\

    $71090$   &  $9.134$E-3     &   $0.989$   &  $6.561$E-1  &   $0.883$  & 71.83 \\

    $99382$   &  $6.384$E-3     &   $1.069$   &  $4.625$E-1  &   $1.043$  & 72.45 \\  
    \hline
\end{tabular*}
\end{table}

\begin{figure}[htbp]
  \centering
  \includegraphics[width=0.45\textwidth]{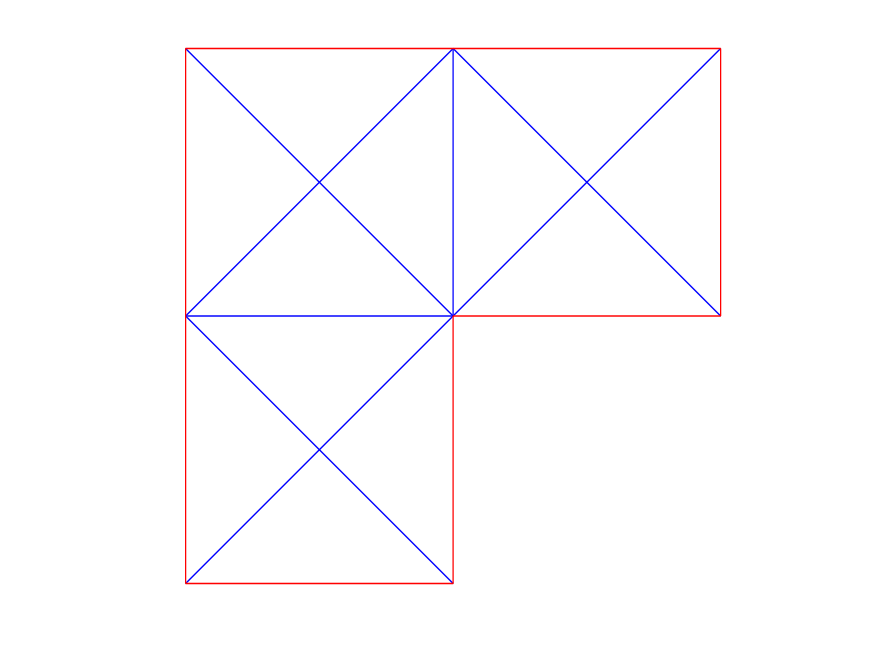}\quad
  \includegraphics[width=0.45\textwidth]{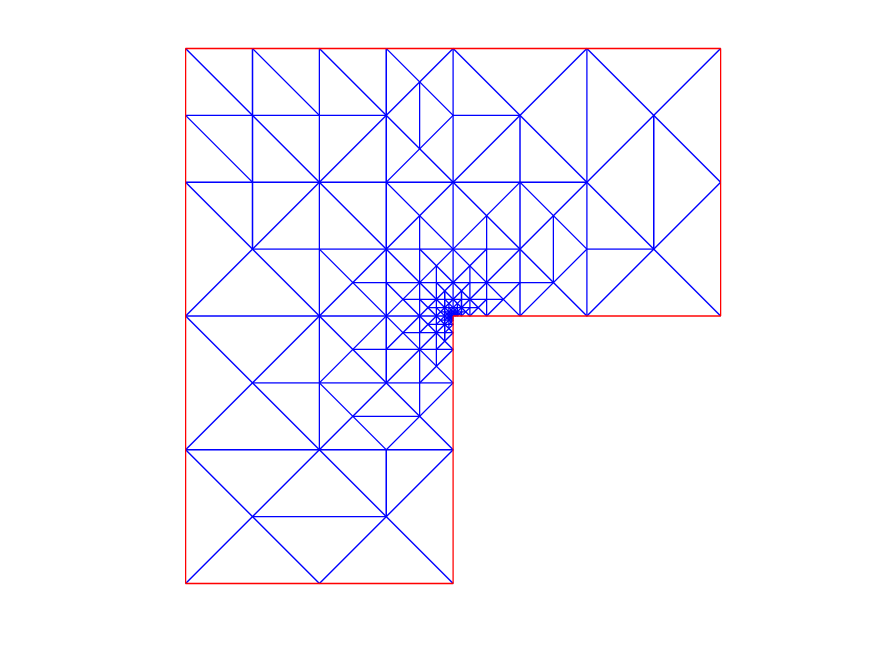}
\caption{The initial mesh (left) and the seventh refinement mesh (right) for Example 2}.
 \label{fig:Mesh and error ex2}
\end{figure}

\textbf{Example 3.} 
This example, adapted from \cite{correaUnifiedMixedFormulation2009}, is modified with an adjustable parameter $\gamma$ to compare the behavior of the effectivity indices of the new and classical a posteriori error estimators defined in (\ref{Def: e gl}) and (\ref{Def: error estimator classical}), 
respectively, under varying pressure magnitudes. 
Let $\Omega$ be the unit square. 
We fix $\nu = 1$ and $\alpha = 0.01$, while selecting $\gamma$ values from a geometric sequence to gradually increase the pressure magnitude. 
The adjoint state functions are chosen as $\bar{\boldsymbol{z}} = 0.01\,\bar{\boldsymbol{y}}$ and $\bar{r} = -0.01\,\bar{p}$, and the control function is given by $\bar{\boldsymbol{u}} = -\bar{\boldsymbol{z}}/\alpha$. 
As in Example~2, an extra source term arises on the right-hand side of the equation.

\begin{table}[htbp]
\centering \caption{The effectivity indices and computational cost for the new and classical \textit{a posteriori} error estimators with $d.o.f=37380$ in Example 3.} \label{tab:effective index ex3}
\begin{tabular*}{\hsize}{@{}@{\extracolsep{\fill}}c|cc|ccc|ccc@{}}
    \hline
   $\gamma$          &  $E_h$ & time(s)   &  $\varepsilon_{pr}$ & $\kappa_{pr}$ & time(s)
   & $\hat{\varepsilon}$ & $\kappa_c$ & time(s)\\
    \hline
      $1$   &  $2.925$E-4      &   $6.420$   &  $2.294$E-2  &   $78.424$          &$0.798$ 
      &  $1.464$E-1  &   $5.006$E2          &$0.239$\\

      $10$    &  $2.925$E-4    &    $6.031$    &  $2.294$E-2  &   $78.424$ &$0.762$ 
      &  $4.368$E-1  &   $1.493$E3 &$0.235$\\

      $10^2$    &  $2.925$E-4    &    $8.301$    &  $2.294$E-2  &   $78.424$ &$0.534$
      &  $4.148$E0  &   $1.417$E4 & $0.159$\\

     $10^3$    &  $2.925$E-4    &    $6.206$    &  $2.294$E-2  &   $78.425$ &$0.811$
     &  $4.147$E1  &   $1.417$E5 & $0.152$\\

     $10^4$    &  $2.925$E-4    &    $5.026$    &  $2.294$E-2  &   $78.510$&$0.831$
     &  $4.147$E2  &   $1.417$E6& $0.188$\\

     $10^5$    &  $2.925$E-4   &    $8.884$    &  $2.294$E-2  &   $81.404$  &$0.546$
     &  $4.147$E3  &   $1.417$E7  &$0.202$\\  
    \hline
\end{tabular*}
\end{table}

Tab.~\ref{tab:effective index ex3} reports the computed effectivity indices of the new and classical \textit{a posteriori} estimators for different values of~$\gamma$. 
As the pressure magnitude increases, the effectivity index of the new estimator remains essentially constant, confirming its pressure independence. 
In contrast, the effectivity index of the classical estimator increases significantly with~$\gamma$, indicating that it is strongly affected by the pressure. 
Fig.~\ref{fig:effective index ex3} illustrates the same trend graphically.

\begin{figure}[htbp]
  \centering
  \includegraphics[width=0.5\textwidth]{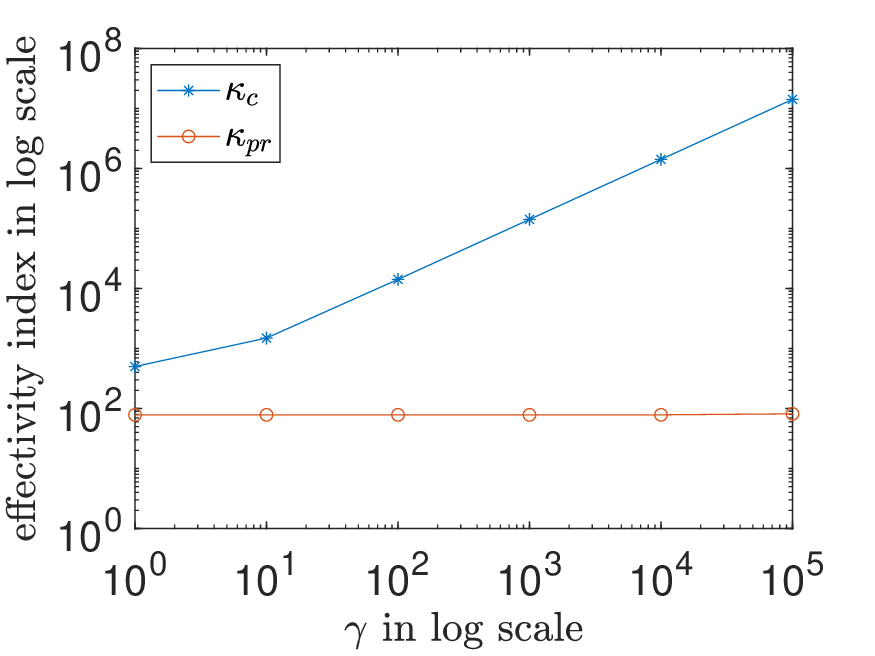}\quad
\caption{The effectivity indices for the new and classical \textit{a posteriori} error estimators with $d.o.f=37380$ in Example 3}.
 \label{fig:effective index ex3}
\end{figure}

This observation is consistent with Theorems~\ref{thm3},\ref{thm2}, and \ref{thm4}, and confirms that the new estimator is robust against pressure effects. 
Although the new estimator requires slightly more computational effort due to the inclusion of the divergence-free reconstruction operator, its computational cost remains acceptable compared with solving the discrete problem (shown in Tab.~\ref{tab:effective index ex3}). 
These results clearly demonstrate the superior performance of the proposed estimator in capturing the velocity error independent of the pressure.

\begin{remark}
Although the theoretical framework developed in this paper applies to both two- and three-dimensional cases, the numerical implementation in three dimensions is considerably more challenging. The main difficulty lies in constructing and applying the divergence-free reconstruction operator 
on tetrahedral meshes, which requires more complex local Raviart–Thomas spaces and significantly increases computational cost and memory usage. Moreover, evaluating the $curl$-based volume terms in the a posteriori error estimator (4.7) becomes more involved in three dimensions due to the vector-valued nature of the $curl$ operator.
\end{remark}

\noindent\textbf{Acknowledgments}
The work is supported by the National Key
 Research and Development Program of China (grant No. 2024YFA1012300, 2024YFA1012302), the National Natural Science Foundation of China (grant No. 12201310, 12301469), and the Natural Science Foundation of Jiangsu Province (grant No. BK20210540). 

\noindent\textbf{Data Availability} Data sharing not applicable to this article as no datasets were generated or analysed during the current study..

\section*{Declarations}
\noindent\textbf{Conflict of interest} The authors have no conflicts of interest to declare.

\bibliographystyle{unsrt}

\end{document}